\documentclass[a4paper]{jpconf} 
\usepackage{iopams,amsgen,amsfonts,amssymb,amsbsy}
\input cyracc.def

\newtheorem{Lem}{Lemma}[section] 
\newtheorem{Prop}[Lem]{Proposition} 
\newtheorem{Thm}[Lem]{Theorem} 
\newtheorem{Cor}[Lem]{Corollary} 
\newtheorem{Def}[Lem]{Definition}

\newenvironment{proof}{{\sc Proof.}}{$\Box$}






\begin{document} 

\title{Symmetry classes connected with the magnetic Heisenberg ring} 
\author{Bernd Fiedler}
\address{Eichelbaumstr. 13, D-04249 Leipzig, Germany. URL: http://www.fiemath.de/}  
\ead{bfiedler@fiemath.de}  

\begin{abstract}
It is well-known (see \cite{weyl4}) that for a Heisenberg magnet symmetry operators and symmetry classes can be defined in a very similar way as for tensors (see e.g. \cite{boerner2,weyl1,fie18}). Newer papers which consider the action of permutations on the Hilbert space $\mathcal{H}$ of the Heisenberg magnet are \cite{jak2lu,jaklu,jak2lu2,jak2lu2b}.

We define symmetry classes and commutation symmetries in the Hilbert space $\mathcal{H}$ of the {\rm 1D} spin-{\rm 1/2} Heisenberg magnetic ring with $N$ sites and investigate them by means of tools from the representation theory of symmetric groups $\mathcal{S}_N$ such as decompositions of ideals of the group ring $\mathbb{C}[\mathcal{S}_N]$, idempotents of $\mathbb{C}[\mathcal{S}_N]$, discrete Fourier transforms of $\mathcal{S}_N$, Littlewood-Richardson products. In particular, we determine smallest symmetry classes and stability subgroups of both single eigenvectors $v$ and subspaces $U$ of eigenvectors of the Hamiltonian $H$ of the magnet. Expectedly, the symmetry classes defined by stability subgroups of $v$ or $U$ are bigger than the corresponding smallest symmetry classes of $v$ or $U$, respectively. The determination of the smallest symmetry class for $U$ bases on an algorithm which calculates explicitely a generating idempotent for a non-direct sum of right ideals of $\mathbb{C}[\mathcal{S}_N]$.

Let $U_{\mu}^{(r_1,r_2)}$ be a subspace of eigenvectors of a a fixed eigenvalue $\mu$ of $H$ with weight $(r_1,r_2)$. If one determines the smallest symmetry class for every $v\in U_{\mu}^{(r_1,r_2)}$ then one can observe jumps of the symmetry behaviour. For ''generic'' $v\in U_{\mu}^{(r_1,r_2)}$ all smallest symmetry classes have the same maximal dimension $d$ and ''structure''. But $U_{\mu}^{(r_1,r_2)}$ can contain linear subspaces on which the dimension of the smallest symmetry class of $v$ jumps to a value smaller than $d$. Then the stability subgroup of $v$ can increase. We can calculate such jumps explicitely.

In our investigations we use computer calculations by means of the {\sf Mathematica} packages {\sf PERMS} and {\sf HRing}.
\end{abstract}

\section{The model of the magnetic Heisenberg ring} \label{sec2}
We summarize essential concepts of the one-dimensional (1D) spin-{1/2} Heisenberg model of a magnetic ring (see e.g. \cite{KarMu,KarHuMu,KarHuMu2}).

We denote by $\widehat{N}$ the set $\widehat{N}:=\{1,\ldots,N\}$ of the integers $1, 2,\ldots,N$ and by $\widehat{K}^{\widehat{N}}$ the set of all functions $\sigma : \widehat{N}\rightarrow\widehat{K}$.
\begin{Def}We assign to every function $\sigma\in\widehat{2}^{\widehat{N}}$ the sequence $|\sigma\rangle := |\sigma(1),\sigma(2),\ldots,\sigma(N)\rangle$ of its values over the set $\widehat{N}$. Then the Hilbert space of the {\rm 1D} spin-{\rm 1/2} Heisenberg ring with $N$ sites is the set of all formal complex linear combinations
$\mathcal{H} := \mathcal{L}_{\mathbb{C}}\left\{\;|\sigma\rangle\;|\;\sigma\in\widehat{2}^{\widehat{N}}\;\right\}$ of the $|\sigma\rangle$ in which the set $\mathcal{B} := \left\{\;|\sigma\rangle\;|\;\sigma\in\widehat{2}^{\widehat{N}}\;\right\}$ of all $|\sigma\rangle$ is considered a set of linearly independent elements. We equip $\mathcal{H}$ with the scalar product
\begin{equation}
\langle\sigma |\sigma'\rangle := {\delta}_{\sigma ,\sigma'} \;\;\;\mbox{and}\;\;\;
\langle u | v\rangle := \sum_{\sigma\in\widehat{2}^{\widehat{N}}} u_{\sigma} \overline{v}_{\sigma}
\;\;\;\mbox{for}\;\;\;
u = \sum_{\sigma\in\widehat{2}^{\widehat{N}}} u_{\sigma} |\sigma\rangle
\;,\;
v = \sum_{\sigma\in\widehat{2}^{\widehat{N}}} v_{\sigma} |\sigma\rangle
\in\mathcal{H}\,.
\end{equation}
\end{Def}
Obviously, $\mathcal{H}$ is an Hilbert space of dimension
$\dim\mathcal{H} = 2^N$, in which $\mathcal{B}$ is an orthonormal basis. The states $|\sigma\rangle\in\cal{H}$ describe {\it magnetic configurations} of the Heissenberg ring. $\sigma(k) = 2$ represents an {\it up spin} $\sigma(k) = \uparrow$ and $\sigma(k) = 1$ a {\it down spin} $\sigma(k) = \downarrow$ at site $k$.
\begin{Def}
According to {\rm\cite{KarMu,KarHuMu}}, the Hamiltonians $H_F$ {\rm (}$H_A${\rm )} of a {\rm 1D} spin-$\frac{1}{2}$ Heisenberg ferromagnet {\rm (}antiferromagnet{\rm )} of $N$ sites with periodic boundary conditions $S_{N+1}^{\alpha} := S_1^{\alpha}$, $\alpha\in\{+ , - , z\}$, are defined by
\begin{equation}
H_F := - J \sum_{k = 1}^N \left[ \frac{1}{2}\left(S_k^{+}S_{k+1}^{-} + S_k^{-}S_{k+1}^{+}\right) + S_k^z S_{k+1}^z \right]\,,\hspace*{0.5cm}H_A := - H_F\,,\hspace*{0.5cm}J = \mathrm{const.} > 0
\end{equation}
where the linear operators $S_k^{+}$, $S_k^{-}$ {\rm (}spin flip operators{\rm )} and $S_k^z$ are given by their values on the basis vectors $|\sigma\rangle$,
\begin{equation}
\begin{array}{l|rr}
 & _{k\downarrow}\hspace*{19pt} & _{k\downarrow}\hspace*{19pt} \\
 & \hspace*{0.5cm}|\ldots 1 \ldots\rangle & \hspace*{0.5cm}|\ldots 2 \ldots\rangle \\
\hline
S_k^{+} & |\ldots 2 \ldots\rangle & 0\hspace*{19pt} \\
S_k^{-} & 0\hspace*{19pt} & |\ldots 1 \ldots\rangle \\
S_k^z & - \frac{1}{2} |\ldots 1 \ldots\rangle & \frac{1}{2} |\ldots 2 \ldots\rangle \\
\end{array}
\end{equation}
\end{Def}
We consider only small rings with about $N\in\{4,\ldots,12\}$. We calculate eigenvalues and eigenvectors of the above Hamiltonians $H$ by expressing $H$ as a real, symmetric $2^N\times 2^N$-matrix $(H_{\mu \nu})$, defined by $H|\sigma\rangle = \sum_{\nu} H_{\sigma \nu} |\nu\rangle$, and using numerical standard diagonalization algorithms.

\section{Symmetry operators, symmetry classes and commutation symmetries} \label{sec3}%
Now we define symmetry classes in $\mathcal{H}$ in analogy to the definition of symmetry classes of tensors (see \cite{weyl4,boerner2,weyl1,fie18}).
\begin{Def}
Let $\mathcal{H}$ be the Hilbert space of a Heisenberg ring of $N$ sites, $\mathcal{S}_N$ be the symmetric group of the permutations of $1, 2,\ldots,N$ and $\mathbb{C}[\mathcal{S}_N]$ be the complex group ring of $\mathcal{S}_N$.
\begin{itemize}
\item[\rm (i)]{Following {\rm\cite{jak2lu2,jak2lu2b}} we define the action of a permutation $p\in\mathcal{S}_N$ on a basis vector $|\sigma\rangle\in\mathcal{H}$ by $p |\sigma\rangle := |\sigma\circ p^{-1}\rangle$.}
\item[\rm (ii)]{Every group ring element $a = \sum_p a_p p\in\mathbb{C}[\mathcal{S}_N]$ acts as so-called symmetry operator on the vectors $w = \sum_{\sigma} w_{\sigma} |\sigma\rangle\in\mathcal{H}$ by
\begin{equation}
aw :=
\sum_{p\in\mathcal{S}_N} \sum_{\sigma\in{\widehat{2}}^{\widehat{N}}} a_p w_{\sigma} p |\sigma\rangle =
\sum_{p\in\mathcal{S}_N} \sum_{\sigma\in{\widehat{2}}^{\widehat{N}}} a_p w_{\sigma} |\sigma\circ p^{-1}\rangle
\end{equation}
}
\end{itemize}
\end{Def}
A symmetry operator is a linear mapping $a:\mathcal{H}\rightarrow\mathcal{H}$ which possesses the following properties:
\begin{Prop}
\begin{itemize}
\item[\rm (i)]{It holds $p(q|\sigma\rangle) = (p\circ q)|\sigma\rangle$ for all $p,q\in\mathcal{S}_N$, where ''$\circ$'' is defined by $(p\circ q)(i) := p(q(i))$.}
\item[\rm (ii)]{It holds $a(bw) = (a\cdot b)w$ for all $a,b\in\mathbb{C}[\mathcal{S}_N]$ and $w\in\mathcal{H}$, where ''$\cdot$'' denotes the multiplication $a\cdot b := \sum_p \sum_q a_p b_q p\circ q$ of group ring elements.}
\item[\rm (iii)]{It holds $\langle au | v\rangle = \langle u |\overline{a}^{\ast}v\rangle$ for all $a\in\mathbb{C}[\mathcal{S}_N]$ and $u,v\in\mathcal{H}$, where $\overline{a}$ denotes the complex conjugate of $a$ and the element $a^{\ast}\in\mathbb{C}[\mathcal{S}_N]$ of $a = \sum_p a_p p$ is defined by $a^{\ast} = \sum_p a_p p^{-1}$.}
\end{itemize}
\end{Prop}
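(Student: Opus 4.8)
The plan is to prove all three statements by unwinding the definitions and reducing, via (bi)linearity, to the action on single basis kets $|\sigma\rangle$ and on single permutations $p,q\in\mathcal{S}_N$.

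\emph{Part (i).} I would simply apply the defining rule $p|\sigma\rangle := |\sigma\circ p^{-1}\rangle$ twice: $p(q|\sigma\rangle) = p|\sigma\circ q^{-1}\rangle = |(\sigma\circ q^{-1})\circ p^{-1}\rangle$. Associativity of composition of maps $\widehat{N}\to\widehat{N}$ (applied to $\sigma$, $q^{-1}$, $p^{-1}$, read off pointwise) rewrites this as $|\sigma\circ(q^{-1}\circ p^{-1})\rangle$, and since inversion in the group $(\mathcal{S}_N,\circ)$ reverses products, $q^{-1}\circ p^{-1} = (p\circ q)^{-1}$; hence the ket equals $|\sigma\circ(p\circ q)^{-1}\rangle = (p\circ q)|\sigma\rangle$. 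This order‑reversal is the only delicate point in (i): it is precisely what turns the right action $\sigma\mapsto\sigma\circ p^{-1}$ into a genuine \emph{left} action of $\mathcal{S}_N$ on the kets, so that $p\mapsto(|\sigma\rangle\mapsto p|\sigma\rangle)$ is a representation of $\mathcal{S}_N$ on $\mathcal{H}$.

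\emph{Part (ii).} Here I would expand both sides. Writing $w=\sum_\sigma w_\sigma|\sigma\rangle$ and using linearity of the symmetry‑operator action, $a(bw)=\sum_{p}\sum_{q}\sum_{\sigma} a_p b_q w_\sigma\, p(q|\sigma\rangle)$, which by (i) equals $\sum_{p}\sum_{q}\sum_{\sigma} a_p b_q w_\sigma\,(p\circ q)|\sigma\rangle$. On the other hand, by definition of the product and of the action, $(a\cdot b)w=\bigl(\sum_{p}\sum_{q} a_p b_q\,(p\circ q)\bigr)w=\sum_{p}\sum_{q}\sum_{\sigma} a_p b_q w_\sigma\,(p\circ q)|\sigma\rangle$. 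The two expressions coincide. (Equivalently, (ii) is just the statement that the representation from (i), extended linearly, is an algebra homomorphism $\mathbb{C}[\mathcal{S}_N]\to\mathrm{End}(\mathcal{H})$.)

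\emph{Part (iii).} By linearity in $u$ and conjugate‑linearity in $v$ it suffices to treat $u=|\sigma\rangle$, $v=|\tau\rangle$ and $a=p$ a single permutation with $a_p=1$. Then $p|\sigma\rangle=|\sigma\circ p^{-1}\rangle$ gives $\langle pu|v\rangle=\langle\sigma\circ p^{-1}|\tau\rangle=\delta_{\sigma\circ p^{-1},\tau}$, while $\overline{p}^{\ast}=p^{-1}$ and $p^{-1}|\tau\rangle=|\tau\circ p\rangle$ give $\langle u|\overline{p}^{\ast}v\rangle=\langle\sigma|\tau\circ p\rangle=\delta_{\sigma,\tau\circ p}$; since $\sigma\circ p^{-1}=\tau\iff\sigma=\tau\circ p$, these agree. (In other words each permutation operator permutes the orthonormal basis $\mathcal{B}$ and is therefore unitary, with $p^{-1}=p^{\ast}=\overline{p}^{\ast}$, so $\langle pu|v\rangle=\langle u|p^{-1}v\rangle$.) For general $a=\sum_p a_p p$, linearity in $u$ yields $\langle au|v\rangle=\sum_p a_p\langle pu|v\rangle$; on the other side $\overline{a}^{\ast}v=\sum_p\overline{a_p}\,p^{-1}v$, and pulling $\overline{a_p}$ out of the second slot of $\langle\,\cdot\,|\,\cdot\,\rangle$ reintroduces its conjugate, so $\langle u|\overline{a}^{\ast}v\rangle=\sum_p\overline{\overline{a_p}}\,\langle u|p^{-1}v\rangle=\sum_p a_p\langle u|p^{-1}v\rangle$. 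Thus the general identity reduces to the single‑permutation case already verified. None of the three parts presents a real obstacle — the proof is pure bookkeeping — and the only steps needing care are the order‑reversal $(p\circ q)^{-1}=q^{-1}\circ p^{-1}$ in (i) and the placement of the complex conjugation in (iii), which is dictated by the convention that $\langle\,\cdot\,|\,\cdot\,\rangle$ is conjugate‑linear in its second argument.
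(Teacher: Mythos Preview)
Your proof is correct and follows essentially the same route as the paper: (i) and (ii) are dismissed there as ``easy calculations'' (your explicit unwinding is exactly what is meant), and for (iii) the paper uses the same core identity $\langle\sigma\circ p^{-1}|\sigma'\rangle=\langle\sigma|\sigma'\circ p\rangle=\langle\sigma|p^{-1}|\sigma'\rangle$ that you derive. The only cosmetic difference is that the paper keeps the full sums $\sum_p\sum_\sigma\sum_{\sigma'}a_p u_\sigma\overline{v}_{\sigma'}\langle\sigma\circ p^{-1}|\sigma'\rangle$ throughout rather than first reducing to a single permutation and basis kets, but the content is identical.
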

\begin{proof}
(i) and (ii) can be shown by easy calculations. To prove (iii) we calculate
\begin{equation} \label{e5}%
\langle au | v\rangle = \sum_p \sum_{\sigma} \sum_{\sigma'} a_p u_{\sigma} \overline{v}_{\sigma'} \langle\sigma\circ p^{-1} |\sigma'\rangle
\end{equation}
for $a = \sum_p a_p p\in\mathbb{C}[\mathcal{S}_N]$ and  $u = \sum_{\sigma} u_{\sigma} |\sigma\rangle$, $v = \sum_{\sigma'} v_{\sigma'} |\sigma'\rangle\in\mathcal{H}$. From
$\langle\sigma\circ p^{-1} |\sigma'\rangle = \delta_{\sigma\circ p^{-1} , \sigma'}$ we obtain
\begin{equation} \label{e6}%
\langle\sigma\circ p^{-1}  |\sigma'\rangle =
\langle\sigma |\sigma'\circ p\rangle =
\langle\sigma | p^{-1}|\sigma'\rangle\,.
\end{equation}
A substitution of (\ref{e6}) in (\ref{e5}) yields
\[
\langle au | v\rangle = \sum_{\sigma} \sum_{\sigma'} u_{\sigma} \overline{v}_{\sigma'} \bigg\langle\sigma \bigg| \sum_p\overline{a}_p p^{-1}\bigg|\sigma'\bigg\rangle =
\sum_{\sigma} \sum_{\sigma'} u_{\sigma} \overline{v}_{\sigma'} \langle\sigma | \overline{a}^{\ast}|\sigma'\rangle =
\langle u |\overline{a}^{\ast}v\rangle\,,
\]
which proves (iii).
\end{proof}

Let $N = r_1 + r_2$ be a decomposition of $N$ into two natural numbers. We denote by $\mathcal{B}^{(r_1 , r_2)}$ the subset
$\mathcal{B}^{(r_1 , r_2)} := \{ |\sigma\rangle\in\mathcal{B}\;|\;|{\sigma}^{-1}(k)| = r_k \ \mbox{for all} \ k\in\widehat{2}\}$ of all $|\sigma\rangle\in\mathcal{B}$ which contain $r_1$-times the $1$ and $r_2$-times the $2$. The 2-tuple $(r_1,r_2)$ is called the {\it weight} of the $|\sigma\rangle\in\mathcal{B}^{(r_1 , r_2)}$. Since $H_F$ and $H_A$ preserve $\mathcal{B}^{(r_1 , r_2)}$, the span
$\mathcal{H}^{(r_1 , r_2)} := \mathcal{L}_{\mathbb{C}}\mathcal{B}^{(r_1 , r_2)}\subset\mathcal{H}$ is an invariant subspace of $H_F$ and $H_A$.
\begin{Def}
If $G\subseteq\mathcal{S}_N$ is a subgroup of $\mathcal{S}_N$ then we denote by $1_G$ the group ring element $1_G := \frac{1}{|G|}\sum_{p\in G} p$ which is an idempotent. $|G|$ denotes the cardinality of $G$.
\end{Def}
The elements of a space $\mathcal{H}^{(r_1 , r_2)}$ can be generated by a suitable symmetry operator from a single basis vector from $\mathcal{B}^{(r_1 , r_2)}$. We denote by $\mathcal{S}_{r_1,r_2} \cong \mathcal{S}_{r_1}\times\mathcal{S}_{r_2}$ the Young subgroup of $\mathcal{S}_N$ the elements of which carry out permutations only within $1,\ldots,r_1$ and within $r_1 + 1,\ldots,N$. Let
$\mathfrak{R}_{r_1,r_2}$ be the set of those representatives of the left cosets $p\cdot\mathcal{S}_{r_1,r_2}$ of $\mathcal{S}_N$ relative to $\mathcal{S}_{r_1,r_2}$ which are the lexicographically smallest elements of their left coset. Further we use the idempotent
$1_{\mathcal{S}_{r_1,r_2}} := \frac{1}{r_1!\,r_2!}\,\sum_{s\in\mathcal{S}_{r_1,r_2}}s$.
Now we have
\begin{Thm}
For every vector $w\in\mathcal{H}^{(r_1 , r_2)}$ there exists a unique
$a_w\in\mathcal{L}_{\mathbb{C}}\mathfrak{R}_{r_1,r_2}$ such that
\begin{equation} \label{e8}%
w = a_w\cdot 1_{\mathcal{S}_{r_1,r_2}} |{\tau}_0\rangle\;\;\;,\;\;\;
|{\tau}_0\rangle := |\underbrace{1,\ldots,1}_{r_1},\underbrace{2,\ldots,2}_{r_2}\rangle
\end{equation}
\end{Thm}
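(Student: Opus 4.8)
The plan is to recognize the assignment $a\mapsto a\cdot 1_{\mathcal{S}_{r_1,r_2}}|{\tau}_0\rangle$ as a linear isomorphism from $\mathcal{L}_{\mathbb{C}}\mathfrak{R}_{r_1,r_2}$ onto $\mathcal{H}^{(r_1 , r_2)}$; the existence and uniqueness of $a_w$ are then nothing but the surjectivity and injectivity of this map. The proof is therefore split into (a) a simplification of the right-hand side of (\ref{e8}), (b) a combinatorial identification of a basis, and (c) a linear-algebra conclusion.

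First I would simplify. Every $s\in\mathcal{S}_{r_1,r_2}$ permutes the indices within $\{1,\dots,r_1\}$ and within $\{r_1+1,\dots,N\}$ separately, so $\tau_0\circ s^{-1}=\tau_0$ and hence $s|{\tau}_0\rangle=|{\tau}_0\rangle$; consequently $1_{\mathcal{S}_{r_1,r_2}}|{\tau}_0\rangle = \frac{1}{r_1!\,r_2!}\sum_{s\in\mathcal{S}_{r_1,r_2}}s|{\tau}_0\rangle = |{\tau}_0\rangle$. Thus it suffices to prove that $a\mapsto a|{\tau}_0\rangle$ restricts to a bijection $\mathcal{L}_{\mathbb{C}}\mathfrak{R}_{r_1,r_2}\to\mathcal{H}^{(r_1 , r_2)}$. (The factor $1_{\mathcal{S}_{r_1,r_2}}$ is kept in the statement only because it records that $a_w\cdot 1_{\mathcal{S}_{r_1,r_2}}$ lies in the right ideal $\mathcal{L}_{\mathbb{C}}\mathfrak{R}_{r_1,r_2}\cdot 1_{\mathcal{S}_{r_1,r_2}}$; it plays no further role here.)

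Next comes the core step: the map $p\mapsto p|{\tau}_0\rangle=|{\tau}_0\circ p^{-1}\rangle$ is a bijection from $\mathfrak{R}_{r_1,r_2}$ onto the basis $\mathcal{B}^{(r_1 , r_2)}$ of $\mathcal{H}^{(r_1 , r_2)}$. For surjectivity: if $|\sigma\rangle\in\mathcal{B}^{(r_1 , r_2)}$ then $\sigma$ takes the value $1$ exactly $r_1$ times and $2$ exactly $r_2$ times, so there is $q\in\mathcal{S}_N$ with $\sigma=\tau_0\circ q^{-1}$, i.e. $q|{\tau}_0\rangle=|\sigma\rangle$; replacing $q$ by the lexicographically smallest element $p$ of the left coset $q\cdot\mathcal{S}_{r_1,r_2}$ still gives $p|{\tau}_0\rangle=q(s|{\tau}_0\rangle)=q|{\tau}_0\rangle=|\sigma\rangle$ (using Proposition (i) and $s|{\tau}_0\rangle=|{\tau}_0\rangle$), and $p\in\mathfrak{R}_{r_1,r_2}$. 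For injectivity: if $p,q\in\mathfrak{R}_{r_1,r_2}$ with $p|{\tau}_0\rangle=q|{\tau}_0\rangle$, then $q^{-1}p$ stabilizes $|{\tau}_0\rangle$; but a permutation $t$ fixes $|{\tau}_0\rangle$ exactly when $\tau_0\circ t^{-1}=\tau_0$, i.e. when $t$ preserves the two level sets $\{1,\dots,r_1\}$ and $\{r_1+1,\dots,N\}$ of $\tau_0$, which means precisely $t\in\mathcal{S}_{r_1,r_2}$. Hence $p$ and $q$ lie in the same left coset of $\mathcal{S}_{r_1,r_2}$ and, both being its lexicographically smallest element, $p=q$. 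Since moreover $|\mathfrak{R}_{r_1,r_2}|=[\mathcal{S}_N:\mathcal{S}_{r_1,r_2}]=\frac{N!}{r_1!\,r_2!}=|\mathcal{B}^{(r_1 , r_2)}|=\dim\mathcal{H}^{(r_1 , r_2)}$, the bijection onto all of $\mathcal{B}^{(r_1 , r_2)}$ follows.

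Finally I would conclude. The set $\mathfrak{R}_{r_1,r_2}$ is a basis of $\mathcal{L}_{\mathbb{C}}\mathfrak{R}_{r_1,r_2}$ and, by the previous step, its image under $a\mapsto a|{\tau}_0\rangle$ is the basis $\mathcal{B}^{(r_1 , r_2)}$ of $\mathcal{H}^{(r_1 , r_2)}$; a linear map carrying a basis bijectively onto a basis is an isomorphism. Explicitly, writing $w=\sum_{|\sigma\rangle\in\mathcal{B}^{(r_1 , r_2)}}w_{\sigma}|\sigma\rangle$ and letting $p_{\sigma}\in\mathfrak{R}_{r_1,r_2}$ be the unique representative with $p_{\sigma}|{\tau}_0\rangle=|\sigma\rangle$, the element $a_w:=\sum_{\sigma}w_{\sigma}\,p_{\sigma}\in\mathcal{L}_{\mathbb{C}}\mathfrak{R}_{r_1,r_2}$ satisfies $a_w\cdot 1_{\mathcal{S}_{r_1,r_2}}|{\tau}_0\rangle=a_w|{\tau}_0\rangle=w$, and it is the only such element because the $|\sigma\rangle$ are linearly independent. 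The one point that needs genuine care is the orientation of the group action: with the convention $p|\sigma\rangle=|\sigma\circ p^{-1}\rangle$ one must verify that the stabilizer of $|{\tau}_0\rangle$ is exactly the Young subgroup $\mathcal{S}_{r_1,r_2}$ (not a conjugate or an ``opposite'' group) and that passing to the lexicographically smallest coset representative does not alter the image vector; both reduce to the elementary equivalence $\tau_0\circ s^{-1}=\tau_0\iff s\in\mathcal{S}_{r_1,r_2}$, after which everything is counting and linear independence.
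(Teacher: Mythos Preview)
Your proof is correct and follows essentially the same approach as the paper: both establish that each basis vector $|\sigma\rangle\in\mathcal{B}^{(r_1,r_2)}$ equals $r_\sigma\cdot 1_{\mathcal{S}_{r_1,r_2}}|\tau_0\rangle$ for a unique $r_\sigma\in\mathfrak{R}_{r_1,r_2}$ and then extend linearly to obtain $a_w=\sum_\sigma w_\sigma r_\sigma$. Your version is in fact more complete, since you spell out the injectivity argument (the stabilizer of $|\tau_0\rangle$ is exactly $\mathcal{S}_{r_1,r_2}$) that justifies uniqueness, whereas the paper simply asserts ``$a_w$ is unique'' without further comment.
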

\begin{proof}
Obviously, for every $|\sigma\rangle\in\mathcal{B}^{(r_1,r_2)}$ there exists a $p\in\mathcal{S}_N$ such that $|\sigma\rangle = p|{\tau}_0\rangle$. $p$ belongs to exactly one left coset $r_{\sigma}\cdot\mathcal{S}_{r_1,r_2}$, where
$r_{\sigma}\in\mathfrak{R}_{r_1,r_2}$, i.e. $p = r_{\sigma}\circ s$ with $s\in\mathcal{S}_{r_1,r_2}$. Since
$1_{\mathcal{S}_{r_1,r_2}} |{\tau}_0\rangle = |{\tau}_0\rangle$ and
$s\cdot 1_{\mathcal{S}_{r_1,r_2}} = 1_{\mathcal{S}_{r_1,r_2}}$ for all
$s\in\mathcal{S}_{r_1,r_2}$, we obtain
$|\sigma\rangle = r_{\sigma}\cdot 1_{\mathcal{S}_{r_1,r_2}} |{\tau}_0\rangle$. Consequently, every
$w = \sum_{\sigma\in\mathcal{B}^{(r_1,r_2)}} w_{\sigma} |\sigma\rangle\in\mathcal{H}^{(r_1 , r_2)}$ fulfills (\ref{e8}) with
\begin{equation}
a_w := \sum_{\sigma\in\mathcal{B}^{(r_1,r_2)}} w_{\sigma} r_{\sigma}\in\mathcal{L}_{\mathbb{C}}\mathfrak{R}_{r_1,r_2}\,.
\end{equation}
$a_w$ is unique.
\end{proof}

Now we define symmetry classes in $\mathcal{H}$ in the same way as they can be defined for tensors.
\begin{Def}
Let $\mathcal{R}\subseteq\mathbb{C}[\mathcal{S}_N]$ be a right ideal of
$\mathbb{C}[\mathcal{S}_N]$. Then
\begin{equation}
\mathcal{H}_{\mathcal{R}} := \{ au\in\mathcal{H}\;|\;a\in\mathcal{R}\,,\,
u\in\mathcal{H}\}
\end{equation}
is called the symmetry class of $\mathcal{H}$ defined by $\mathcal{R}$.
\end{Def}
As for tensors one can prove (see e.g. \cite[Chap.~V, {\S}4]{boerner2} or \cite[p.~115]{fie16}).
\begin{Prop}
Let $e\in\mathbb{C}[\mathcal{S}_N]$ be a generating idempotent of a right ideal $\mathcal{R}\subseteq\mathbb{C}[\mathcal{S}_N]$, i.e.
$\mathcal{R} = e\cdot\mathbb{C}[\mathcal{S}_N]$. Then a $u\in\mathcal{H}$ is in $\mathcal{H}_{\mathcal{R}}$ iff $eu = u$.
\end{Prop}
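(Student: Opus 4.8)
The plan is to prove the two implications of the "iff" separately, using only the idempotence of $e$ and the basic properties of symmetry operators collected in the first Proposition.

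\emph{Sufficiency.} If $eu = u$, then since $e\in\mathcal{R}$ and $u\in\mathcal{H}$, the vector $eu$ lies in $\mathcal{H}_{\mathcal{R}}$ by the definition of the symmetry class; hence $u = eu\in\mathcal{H}_{\mathcal{R}}$.

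\emph{Necessity.} Suppose $u\in\mathcal{H}_{\mathcal{R}}$, so $u = aw$ for some $a\in\mathcal{R}$ and $w\in\mathcal{H}$. Because $\mathcal{R} = e\cdot\mathbb{C}[\mathcal{S}_N]$, we may write $a = e\cdot b$ for some $b\in\mathbb{C}[\mathcal{S}_N]$. Using part (ii) of the Proposition (that $a(bw) = (a\cdot b)w$), together with $e\cdot e = e$, I would compute
\begin{equation}
eu = e(aw) = e((e\cdot b)w) = (e\cdot e\cdot b)w = (e\cdot b)w = aw = u,
\end{equation}
which is exactly the claim. The point is simply that $a\in\mathcal{R}$ implies $e\cdot a = a$ (apply $e$ on the left to $a = e\cdot b$), so acting by $e$ fixes every element of the image of $a$.

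There is really no obstacle here: the statement is the standard fact that a generating idempotent of a right ideal acts as the identity on the corresponding symmetry class, and the proof is the same formal argument as in the tensor case cited from Boerner and from \cite{fie16}. The only thing to be careful about is the order of multiplication — because $\mathcal{R}$ is a \emph{right} ideal with $\mathcal{R} = e\cdot\mathbb{C}[\mathcal{S}_N]$, left multiplication by $e$ is the natural projector onto $\mathcal{R}$, and this matches the left action of symmetry operators on $\mathcal{H}$ via part (ii) of the Proposition; so the computation goes through verbatim.
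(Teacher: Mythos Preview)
Your proof is correct and is precisely the standard argument: the paper does not spell out a proof of this proposition at all but simply refers to \cite[Chap.~V, \S4]{boerner2} and \cite[p.~115]{fie16}, where exactly this idempotent computation for the tensor case is given. There is nothing to add.
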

\begin{Cor}
If $e, f$ are generating idempotents of a right ideal $\mathcal{R}\subseteq\mathbb{C}[\mathcal{S}_N]$, then $u\in\mathcal{H}_{\mathcal{R}}$ iff $eu = fu = u$.
\end{Cor}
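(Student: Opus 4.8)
The plan is to derive this Corollary directly from the preceding Proposition by invoking it twice, once for each of the two generating idempotents. Since $e$ is a generating idempotent of $\mathcal{R}$, i.e.\ $\mathcal{R} = e\cdot\mathbb{C}[\mathcal{S}_N]$, the Proposition yields the equivalence $u\in\mathcal{H}_{\mathcal{R}} \iff eu = u$. The crucial observation is that $f$ is assumed to be a generating idempotent of the \emph{same} right ideal $\mathcal{R}$, so the Proposition applies verbatim with $f$ in place of $e$, giving $u\in\mathcal{H}_{\mathcal{R}} \iff fu = u$.

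Conjoining the two biconditionals, a vector $u\in\mathcal{H}$ lies in $\mathcal{H}_{\mathcal{R}}$ precisely when both $eu = u$ and $fu = u$ hold, which is exactly the combined condition $eu = fu = u$. For the converse direction one only needs half of this: if $eu = fu = u$ then in particular $eu = u$, whence $u\in\mathcal{H}_{\mathcal{R}}$ by the Proposition, and $fu = u$ is then automatically consistent. This establishes the stated equivalence.

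There is essentially no technical obstacle here; the statement is a formal consequence of the Proposition. The only point worth stressing is the hypothesis that $e$ and $f$ generate the \emph{same} right ideal $\mathcal{R}$ — without it, the condition $eu = fu = u$ would instead characterize membership in the intersection $\mathcal{H}_{e\mathbb{C}[\mathcal{S}_N]}\cap\mathcal{H}_{f\mathbb{C}[\mathcal{S}_N]}$, a generally strictly smaller space. In practice the Corollary is convenient because it lets one test membership in $\mathcal{H}_{\mathcal{R}}$ using whichever generating idempotent (a Young symmetrizer, or one of the explicitly computed idempotents referred to in the abstract) is most tractable for the computation at hand.
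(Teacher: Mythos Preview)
Your argument is correct: applying the preceding Proposition once to $e$ and once to $f$ (both generating the same $\mathcal{R}$) immediately yields the biconditional. The paper itself gives no explicit proof of this Corollary, treating it as an immediate consequence of the Proposition, which is precisely what you have written out.
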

\begin{Def}
We denote by $\mathcal{J}_0$ the set $\mathcal{J}_0 := \{a\in\mathbb{C}[\mathcal{S}_N]\;|\;au = 0 \forall\,u\in\mathcal{H}\}$.
\end{Def}
If $N > 2$ then $\mathcal{J}_0\not=\{0\}$, because then the idempotent
$e := \frac{1}{N!} \sum_p \mathrm{sign}(p) p$ satisfies $e|\sigma\rangle = 0$ for all $|\sigma\rangle\in\mathcal{B}$.
\begin{Prop}[{\cite[p.~116]{fie16}}]
$\mathcal{J}_0$ is a two-sided ideal of $\mathbb{C}[\mathcal{S}_N]$.
\end{Prop}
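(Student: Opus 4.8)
The plan is to check directly that $\mathcal{J}_0$ is closed under the three operations that make it a two-sided ideal: linear combinations, multiplication on the right by arbitrary group ring elements, and multiplication on the left by arbitrary group ring elements. All three reductions rest on the single structural fact recorded in the Proposition after the definition of symmetry operators, namely that for every $a\in\mathbb{C}[\mathcal{S}_N]$ the map $w\mapsto aw$ is linear on $\mathcal{H}$ and that $a(bw)=(a\cdot b)w$ for all $a,b\in\mathbb{C}[\mathcal{S}_N]$ and $w\in\mathcal{H}$.

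First I would note that $\mathcal{J}_0$ is a $\mathbb{C}$-subspace of $\mathbb{C}[\mathcal{S}_N]$: since the assignment $a\mapsto aw$ is additive and $\mathbb{C}$-homogeneous in $a$ for each fixed $w\in\mathcal{H}$, if $a,b\in\mathcal{J}_0$ and $\lambda,\mu\in\mathbb{C}$ then $(\lambda a+\mu b)w=\lambda(aw)+\mu(bw)=0$ for every $w$, so $\lambda a+\mu b\in\mathcal{J}_0$. Next, for the right ideal property, take $a\in\mathcal{J}_0$ and $b\in\mathbb{C}[\mathcal{S}_N]$; for an arbitrary $u\in\mathcal{H}$ one has $bu\in\mathcal{H}$ (a symmetry operator maps $\mathcal{H}$ into $\mathcal{H}$), and hence $(a\cdot b)u=a(bu)=0$ because $a$ annihilates every vector of $\mathcal{H}$; thus $a\cdot b\in\mathcal{J}_0$. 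Finally, for the left ideal property, take $a\in\mathcal{J}_0$ and $b\in\mathbb{C}[\mathcal{S}_N]$; for arbitrary $u\in\mathcal{H}$ we get $(b\cdot a)u=b(au)=b\,0=0$ using $au=0$ and the linearity of the operator $b$; hence $b\cdot a\in\mathcal{J}_0$. Combining the three closure statements shows that $\mathcal{J}_0$ is a two-sided ideal.

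There is in fact no serious obstacle in this argument; it is a formal consequence of the module structure of $\mathcal{H}$ over $\mathbb{C}[\mathcal{S}_N]$. The only point requiring a moment's care is to invoke the associativity identity $a(bw)=(a\cdot b)w$ in the correct direction in the right-ideal case (where $bw$ must be recognized as again lying in $\mathcal{H}$), and to use plain operator-linearity of $b$ in the left-ideal case; once these are in place the verification is immediate.
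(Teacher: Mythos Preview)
Your argument is correct: $\mathcal{J}_0$ is nothing but the annihilator of the $\mathbb{C}[\mathcal{S}_N]$-module $\mathcal{H}$, and your three closure checks are exactly the standard verification that an annihilator is a two-sided ideal. Note that the paper itself does not supply a proof of this proposition but merely cites it from \cite[p.~116]{fie16}, so there is no in-paper argument to compare against; your direct verification is the natural one and would be what any reader reconstructs.
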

Every two-sided ideal of $\mathbb{C}[\mathcal{S}_N]$ has one and only one generating idempotent which is central. Let $f_0$ be the generating idempotent of $\mathcal{J}_0$. Then $f := id - f_0$ is also central idempotent which is orthogonal to $f_0$, i.e. $f\cdot f_0 = f_0\cdot f = 0$, and which generates a two-sided ideal $\mathcal{J} := f\cdot\mathbb{C}[\mathcal{S}_N]$ that fulfills $\mathbb{C}[\mathcal{S}_N] = \mathcal{J}\oplus \mathcal{J}_0$. $\mathcal{J}$ contains all those symmetry operators $a$ for which $\ker a\subset\mathcal{H}$. These are the symmetry operators in which we are interestet. (Compare \cite[p.~116]{fie16}.)

For a vector $w\in\mathcal{H}^{(r_1,r_2)}$ the smallest symmetry class which contains $w$ can be determined easily.
\begin{Thm} \label{thm3.13}%
Let $w\in\mathcal{H}^{(r_1,r_2)}$ be a fixed vector from $\mathcal{H}^{(r_1,r_2)}$ which we write in form {\rm (\ref{e8})}, i.e.
$w = a_w\cdot 1_{\mathcal{S}_{r_1,r_2}} |{\tau}_0\rangle$. Consider the right ideal
$\mathcal{R}_w := a_w\cdot 1_{\mathcal{S}_{r_1,r_2}}\cdot\mathbb{C}[\mathcal{S}_N]$.
\begin{itemize}
\item[\rm (i)]{Obviously, $w\in\mathcal{H}_{\mathcal{R}_w}$.}
\item[\rm (ii)]{Every right ideal $\mathcal{R}$ the symmetry class
$\mathcal{H}_{\mathcal{R}}$ of which contains $w$ satisfies
$\mathcal{R}_w\subseteq\mathcal{R}$.
}
\end{itemize}
\end{Thm}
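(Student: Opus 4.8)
The plan is to prove (i) and (ii) in turn, where (i) is essentially immediate and (ii) is the real content. For (i) one just observes that $w = a_w\cdot 1_{\mathcal{S}_{r_1,r_2}}|\tau_0\rangle$, so writing $a := a_w\cdot 1_{\mathcal{S}_{r_1,r_2}}\in\mathcal{R}_w$ and $u := |\tau_0\rangle\in\mathcal{H}$, we see $w = au$ is of the form required in the definition of $\mathcal{H}_{\mathcal{R}_w}$; alternatively, since $1_{\mathcal{S}_{r_1,r_2}}$ is idempotent, $a\cdot 1_{\mathcal{S}_{r_1,r_2}} = a$, and one can check directly that $a$ (suitably normalised) acts as the identity on $w$, matching the criterion of the Proposition on generating idempotents — but the bare membership statement needs nothing more than exhibiting $w$ as a product.

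For (ii), let $\mathcal{R}\subseteq\mathbb{C}[\mathcal{S}_N]$ be any right ideal with $w\in\mathcal{H}_{\mathcal{R}}$. By definition there exist $b\in\mathcal{R}$ and $u\in\mathcal{H}$ with $w = bu$. The key step is to show that $u$ may be taken to be $|\tau_0\rangle$, or at least that $b$ may be modified within $\mathcal{R}$ so that the product becomes $a_w\cdot 1_{\mathcal{S}_{r_1,r_2}}$ applied to $|\tau_0\rangle$. I would argue as follows. First, $w\in\mathcal{H}^{(r_1,r_2)}$, and one checks that the orthogonal projection of $\mathcal{H}$ onto $\mathcal{H}^{(r_1,r_2)}$ is itself realised by a symmetry operator, or more directly: decompose $u = \sum_{(s_1,s_2)} u^{(s_1,s_2)}$ into its weight components; since each generator $p\in\mathcal{S}_N$ permutes basis vectors and hence preserves weights, $bu = \sum_{(s_1,s_2)} b\,u^{(s_1,s_2)}$ is the weight decomposition of $bu = w$, so $w = b\,u^{(r_1,r_2)}$ and we may assume $u\in\mathcal{H}^{(r_1,r_2)}$ from the outset. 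Now apply the earlier Theorem to $u$: there is $c\in\mathcal{L}_{\mathbb{C}}\mathfrak{R}_{r_1,r_2}$ with $u = c\cdot 1_{\mathcal{S}_{r_1,r_2}}|\tau_0\rangle$. Hence $w = bu = (b\cdot c\cdot 1_{\mathcal{S}_{r_1,r_2}})|\tau_0\rangle$, and $b\cdot c\cdot 1_{\mathcal{S}_{r_1,r_2}}\in\mathcal{R}$ since $\mathcal{R}$ is a right ideal. So without loss of generality $w = b'|\tau_0\rangle$ with $b' = b'\cdot 1_{\mathcal{S}_{r_1,r_2}}\in\mathcal{R}$.

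It remains to compare $b'$ with $a_w\cdot 1_{\mathcal{S}_{r_1,r_2}}$. Using again that $1_{\mathcal{S}_{r_1,r_2}}|\tau_0\rangle = |\tau_0\rangle$ and that the cosets $r\cdot\mathcal{S}_{r_1,r_2}$, $r\in\mathfrak{R}_{r_1,r_2}$, partition $\mathcal{S}_N$, write $b'\cdot 1_{\mathcal{S}_{r_1,r_2}} = \sum_{r\in\mathfrak{R}_{r_1,r_2}} \beta_r\, r\cdot 1_{\mathcal{S}_{r_1,r_2}}$ for suitable $\beta_r\in\mathbb{C}$; applying this to $|\tau_0\rangle$ and using $r\cdot 1_{\mathcal{S}_{r_1,r_2}}|\tau_0\rangle = r|\tau_0\rangle = |\tau_0\circ r^{-1}\rangle$, and that these are distinct basis vectors for distinct $r$, the uniqueness part of the earlier Theorem forces $\beta_r = (a_w)_r$ for all $r$, i.e. $b'\cdot 1_{\mathcal{S}_{r_1,r_2}} = a_w\cdot 1_{\mathcal{S}_{r_1,r_2}}$ as elements of $\mathbb{C}[\mathcal{S}_N]$. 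Since $b'\cdot 1_{\mathcal{S}_{r_1,r_2}}\in\mathcal{R}$, we conclude $a_w\cdot 1_{\mathcal{S}_{r_1,r_2}}\in\mathcal{R}$, and therefore $\mathcal{R}_w = a_w\cdot 1_{\mathcal{S}_{r_1,r_2}}\cdot\mathbb{C}[\mathcal{S}_N]\subseteq\mathcal{R}$, as claimed.

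The main obstacle is the middle step: getting from "there exist $b\in\mathcal{R}$, $u\in\mathcal{H}$ with $bu = w$" down to "$b'|\tau_0\rangle = w$ with $b'\in\mathcal{R}$." The weight-decomposition observation and the invocation of the earlier representation theorem for $u$ are what make this work cleanly; once $|\tau_0\rangle$ is the argument, the coset bookkeeping together with the uniqueness of $a_w$ closes the argument with no real difficulty. One should double-check that replacing $u$ by $u^{(r_1,r_2)}$ is legitimate, i.e. that $b$ annihilates the other weight components of $u$ or at least that they contribute nothing to $w$ — this follows because $bu = w\in\mathcal{H}^{(r_1,r_2)}$ has no component in any other $\mathcal{H}^{(s_1,s_2)}$ and symmetry operators are weight-preserving.
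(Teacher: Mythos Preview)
Your proof is correct and follows essentially the same route as the paper's: reduce $u$ to its $(r_1,r_2)$-weight component using that symmetry operators preserve weight, rewrite it via the earlier theorem as $c\cdot 1_{\mathcal{S}_{r_1,r_2}}|\tau_0\rangle$, then use the left-coset decomposition of $b\cdot c$ modulo $\mathcal{S}_{r_1,r_2}$ together with the uniqueness of $a_w$ to identify $b\cdot c\cdot 1_{\mathcal{S}_{r_1,r_2}}$ with $a_w\cdot 1_{\mathcal{S}_{r_1,r_2}}$. The only difference is notational ($b,c,b'$ in place of the paper's $a,a_{\tilde u},b$).
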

\begin{proof}
Only (ii) requires a proof. A $w\in\mathcal{H}_{\mathcal{R}}$ can be written as $w = au$ with $a\in\mathcal{R}$ and $u\in\mathcal{H}$. We decompose $u$ into a linear combination $\tilde{u}$ of basis vectors from $\mathcal{B}^{(r_1,r_2)}$ and a linear combination $v$ of basis vectors from $\mathcal{B}\setminus\mathcal{B}^{(r_1,r_2)}$. Since
$w\in\mathcal{H}^{(r_1,r_2)}$ the parts $\tilde{u}$ and $v$ have to fulfill
$av = 0$ and $w = a\tilde{u}$.

Because $\tilde{u}\in\mathcal{H}^{(r_1,r_2)}$ the vector $\tilde{u}$ can be written in the form $\tilde{u} = a_{\tilde{u}}\cdot 1_{\mathcal{S}_{r_1,r_2}} |{\tau}_0\rangle$, too. Consequently, $w = a\cdot a_{\tilde{u}}\cdot 1_{\mathcal{S}_{r_1,r_2}} |{\tau}_0\rangle$.

Now we decompose the element $b := a\cdot a_{\tilde{u}}$ into parts corresponding to the left cosets of $\mathcal{S}_N$ relative to $\mathcal{S}_{r_1,r_2}$:
\begin{equation} \label{e12}%
b\;=\;\sum_{p\in\mathcal{S}_N} b_p p\;=\;
\sum_{r\in\mathfrak{R}_{r_1,r_2}}\sum_{p\in r\cdot\mathcal{S}_{r_1,r_2}} b_p p\;=\;
\sum_{r\in\mathfrak{R}_{r_1,r_2}} r\cdot\left(\sum_{s\in\mathcal{S}_{r_1,r_2}} b_{r\cdot s} s\right)\,.
\end{equation}
Since $s\cdot 1_{\mathcal{S}_{r_1,r_2}} = 1_{\mathcal{S}_{r_1,r_2}}$ for all
$s\in\mathcal{S}_{r_1,r_2}$ we obtain from (\ref{e12})
$b\cdot 1_{\mathcal{S}_{r_1,r_2}} = (\sum_{r\in\mathfrak{R}_{r_1,r_2}} B_r r)\cdot 1_{\mathcal{S}_{r_1,r_2}}$ with $B_r := \sum_{s\in\mathcal{S}_{r_1,r_2}} b_{r\cdot s}$. From this it follows
$w = a\cdot a_{\tilde{u}}\cdot 1_{\mathcal{S}_{r_1,r_2}} |{\tau}_0\rangle =
(\sum_{r\in\mathfrak{R}_{r_1,r_2}} B_r r)\cdot 1_{\mathcal{S}_{r_1,r_2}} |{\tau}_0\rangle$.
On the other hand, it holds $w = a_w\cdot 1_{\mathcal{S}_{r_1,r_2}} |{\tau}_0\rangle$ and $a_w\in\mathcal{L}_{\mathbb{C}}\mathfrak{R}_{r_1,r_2}$ is unique. This leads to $a_w = \sum_{r\in\mathfrak{R}_{r_1,r_2}} B_r r$ and
$a\cdot a_{\tilde{u}}\cdot 1_{\mathcal{S}_{r_1,r_2}} = a_w\cdot 1_{\mathcal{S}_{r_1,r_2}}$. The last relation yields $a_w\cdot 1_{\mathcal{S}_{r_1,r_2}}\in\mathcal{R}$ and $\mathcal{R}_w\subseteq\mathcal{R}$.
\end{proof}
\begin{Thm}
Let $U\subseteq\mathcal{H}^{(r_1,r_2)}$ be a linear subspace of $\mathcal{H}^{(r_1,r_2)}$ with a basis $\{v_1,...,v_l\}$. Consider the right ideal
$\mathcal{R}_U := \sum_{k=1}^l \mathcal{R}_{v_k}$ (non-direct sum).
\begin{itemize}
\item[\rm (i)]{Obviously, $U\subseteq\mathcal{H}_{\mathcal{R}_U}$.}
\item[\rm (ii)]{Every right ideal $\mathcal{R}$ the symmetry class
$\mathcal{H}_{\mathcal{R}}$ of which contains $U$ satisfies
$\mathcal{R}_U\subseteq\mathcal{R}$.
}
\end{itemize}
\end{Thm}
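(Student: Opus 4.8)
The plan is to reduce the statement for the subspace $U$ to the already-proved Theorem~\ref{thm3.13} for single vectors, exploiting that $\mathcal{R}_U$ is by definition the sum $\sum_{k=1}^l\mathcal{R}_{v_k}$ of the smallest right ideals attached to the basis vectors. Part (i) is immediate: each $v_k\in\mathcal{H}_{\mathcal{R}_{v_k}}\subseteq\mathcal{H}_{\mathcal{R}_U}$ because $\mathcal{R}_{v_k}\subseteq\mathcal{R}_U$ and larger right ideals give larger symmetry classes; since $\mathcal{H}_{\mathcal{R}_U}$ is a linear subspace of $\mathcal{H}$ (it is $e\mathcal{H}$ for a generating idempotent $e$ of $\mathcal{R}_U$, hence closed under linear combinations) and it contains a basis of $U$, it contains all of $U$.

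For part (ii), I would argue as follows. Let $\mathcal{R}$ be any right ideal with $U\subseteq\mathcal{H}_{\mathcal{R}}$. Then in particular each basis vector $v_k$ lies in $\mathcal{H}_{\mathcal{R}}$. Applying Theorem~\ref{thm3.13}(ii) to the single vector $v_k\in\mathcal{H}^{(r_1,r_2)}$ gives $\mathcal{R}_{v_k}\subseteq\mathcal{R}$ for every $k\in\{1,\dots,l\}$. Since $\mathcal{R}$ is a right ideal, hence in particular a linear subspace of $\mathbb{C}[\mathcal{S}_N]$ closed under sums, the sum $\mathcal{R}_U=\sum_{k=1}^l\mathcal{R}_{v_k}$ is contained in $\mathcal{R}$ as well. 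This is exactly the asserted inclusion $\mathcal{R}_U\subseteq\mathcal{R}$.

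The only point that needs a word of care — and the place where I expect the one genuine subtlety to sit — is the claim that $\mathcal{H}_{\mathcal{R}'}\subseteq\mathcal{H}_{\mathcal{R}}$ whenever $\mathcal{R}'\subseteq\mathcal{R}$ are right ideals, used for (i). This follows directly from the Definition of a symmetry class: $\mathcal{H}_{\mathcal{R}'}=\{au\mid a\in\mathcal{R}',\,u\in\mathcal{H}\}$, and any such $au$ with $a\in\mathcal{R}'\subseteq\mathcal{R}$ is also of the form ``$a'u'$ with $a'\in\mathcal{R}$''. One should also note explicitly that $\mathcal{H}_{\mathcal{R}_U}$ is genuinely a linear subspace of $\mathcal{H}$ — this is where the characterization $\mathcal{H}_{\mathcal{R}}=\{u\mid eu=u\}$ from the Proposition after the Definition of symmetry class is convenient, since $\{u\mid eu=u\}=e\mathcal{H}$ is visibly linear. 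Everything else is a routine assembly of Theorem~\ref{thm3.13} applied termwise together with the ideal property of $\mathcal{R}$; no new computation with cosets or idempotents is required, precisely because the hard combinatorial work has already been done in the single-vector case.
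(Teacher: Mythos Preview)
Your proof is correct and matches the paper's approach: part~(ii) is verbatim the paper's argument (apply Theorem~\ref{thm3.13} to each $v_k$ and sum the resulting inclusions), and part~(i) differs only cosmetically --- the paper writes an arbitrary $u=\sum_k x_k v_k\in U$ directly as $\bigl(\sum_k x_k\,a_{v_k}\cdot 1_{\mathcal{S}_{r_1,r_2}}\bigr)|\tau_0\rangle$ with the bracketed element lying in $\mathcal{R}_U$, whereas you invoke monotonicity of $\mathcal{R}\mapsto\mathcal{H}_{\mathcal{R}}$ together with linearity of $\mathcal{H}_{\mathcal{R}_U}=e\mathcal{H}$. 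Both routes for (i) are immediate, so there is no substantive difference.
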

\begin{proof}
Ad (i): An arbitrary $u\in U$ can be written as
$u = \sum_k x_k v_k$. If we express every $v_k$ in form (\ref{e8}), i.e.
$v_k = a_{v_k}\cdot 1_{\mathcal{S}_{r_1,r_2}} |{\tau}_0\rangle$, then we obtain
$u = (\sum_k x_k a_{v_k}\cdot 1_{\mathcal{S}_{r_1,r_2}}) |{\tau}_0\rangle$. This leads to $u\in\mathcal{H}_{\mathcal{R}_U}$, because
$\sum_k x_k a_{v_k}\cdot 1_{\mathcal{S}_{r_1,r_2}}\in\sum_k \mathcal{R}_{v_k} = \mathcal{R}_U$.

Ad (ii): If $\mathcal{H}_{\mathcal{R}}$ is a symmetry class with
$U\subseteq\mathcal{H}_{\mathcal{R}}$, then every $v_k$ lies in $\mathcal{H}_{\mathcal{R}}$. From this Theorem \ref{thm3.13} yields
$\mathcal{R}_{v_k}\subseteq\mathcal{R}$ for every $k$. Consequently,
$\mathcal{R}_U = \sum_k \mathcal{R}_{v_k}\subseteq\mathcal{R}$, too.
\end{proof}
\begin{Cor}
If two bases $\{v_1,...,v_l\}$ and $\{v'_1,...,v'_l\}$ of $U$ are given and we form $\mathcal{R}_U := \sum_{k=1}^l \mathcal{R}_{v_k}$ and
$\mathcal{R}'_U := \sum_{k=1}^l \mathcal{R}_{v'_k}$ then
$\mathcal{R}_U = \mathcal{R}'_U$.
\end{Cor}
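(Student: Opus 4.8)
The plan is to observe that the preceding Theorem already characterizes $\mathcal{R}_U$ intrinsically, with no reference to the chosen basis. Indeed, that Theorem makes two assertions about $\mathcal{R}_U = \sum_{k=1}^l \mathcal{R}_{v_k}$: part (i) says $U\subseteq\mathcal{H}_{\mathcal{R}_U}$, and part (ii) says that every right ideal $\mathcal{R}$ with $U\subseteq\mathcal{H}_{\mathcal{R}}$ satisfies $\mathcal{R}_U\subseteq\mathcal{R}$. Taken together, these say that $\mathcal{R}_U$ is the (necessarily unique) smallest right ideal whose symmetry class contains $U$. The same Theorem, applied to the second basis, says the same thing about $\mathcal{R}'_U = \sum_{k=1}^l \mathcal{R}_{v'_k}$.

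First I would note that $\{v'_1,\dots,v'_l\}$ is again a basis of the very same subspace $U$, so the Theorem is applicable to it; by its part (i) we get $U\subseteq\mathcal{H}_{\mathcal{R}'_U}$. Then I would invoke part (ii) of the Theorem for the first basis with the choice $\mathcal{R}:=\mathcal{R}'_U$, which yields $\mathcal{R}_U\subseteq\mathcal{R}'_U$. Interchanging the roles of the two bases and repeating the argument verbatim gives $\mathcal{R}'_U\subseteq\mathcal{R}_U$. Combining the two inclusions produces $\mathcal{R}_U=\mathcal{R}'_U$.

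There is essentially no obstacle here: the entire content of the Corollary is the remark that the statement of the preceding Theorem holds for every basis of $U$, so its minimality clause singles out one and the same right ideal independently of the basis. The only point that must be (trivially) checked is that the passage to a different basis does not change the subspace $U$ itself, which is precisely what ``basis of $U$'' means, so the minimal-element characterization applies unchanged in both cases.
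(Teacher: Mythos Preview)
Your argument is correct and is exactly the intended one: the paper states this Corollary without proof, as an immediate consequence of the preceding Theorem's minimality characterization of $\mathcal{R}_U$, and your double-inclusion argument makes this explicit.
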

A generating idempotent $e$ of $\mathcal{R}_U$ and a decomposition $e = e_1 +\ldots + e_m$ into pairwise orthogonal primitive idempotents $e_k$ can be determined by means of the decomposition algorithm from \cite[Chap.~I]{fie16} or \cite{fie18} which is implemented in the {\sf Mathematica} package {\sf PERMS} \cite{fie10}.

For tensors the following types of symmetries can be defined:
\begin{itemize}
\item[I.]{Commutation symmetries.}
\item[II.]{Symmetries defined by irreducible characters of subgroups $G\subseteq\mathcal{S}_N$.}
\item[III.]{Symmetry classes.}
\item[IV.]{Symmetries defined by a finite set of anihilating symmetry operators.}
\end{itemize}
See \cite[pp.~114]{fie16} for details. All these symmetry types can be defined in $\mathcal{H}$, too. In the present paper we consider III. and I.
\begin{Def}
Let $C\subseteq\mathcal{S}_N$ be a subgroup of $\mathcal{S}_N$ and
$\epsilon: C\rightarrow\mathcal{S}^1$ be a homomorphism of $C$ onto a finite subgroup in the group
$\mathcal{S}^1 = \{z\in\mathbb{C}\;|\;|z| = 1\}$ of complex units. We say that $u\in\mathcal{H}$ possesses the commutation symmetry $(C,\epsilon)$ if
$cu = \epsilon(c)u$ for all $c\in C$.
\end{Def}
\begin{Prop}[{\cite[p.~115]{fie16}}] \label{prop2.14}%
Let $(C,\epsilon)$ be a commutation symmetry.
\begin{itemize}
\item[\rm (i)]{Then the group ring element
$\epsilon := \frac{1}{|C|}\,\sum_{c\in C}\epsilon(c)c$
is an idempotent of $\mathbb{C}[C]\subseteq\mathbb{C}[\mathcal{S}_N]$.}
\item[\rm (ii)]{A $u\in\mathcal{H}$ has the symmetry $(C,\epsilon)$ iff
${\epsilon}^{\ast}u = u$.}
\end{itemize}
\end{Prop}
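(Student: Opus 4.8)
The plan is to prove (i) by a one-line computation in the group ring $\mathbb{C}[C]$ that exploits the multiplicativity of $\epsilon$, and to reduce (ii) to the one-sided eigenrelation $c\cdot\epsilon^{\ast}=\epsilon(c)\,\epsilon^{\ast}$ in $\mathbb{C}[\mathcal{S}_N]$ together with the associativity rule $a(bw)=(a\cdot b)w$ from Proposition (ii) above.

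For (i) I would write the group ring element as $\epsilon=\frac{1}{|C|}\sum_{c\in C}\epsilon(c)c$ and expand
\[
\epsilon\cdot\epsilon=\frac{1}{|C|^{2}}\sum_{c,d\in C}\epsilon(c)\epsilon(d)\,(c\circ d).
\]
For each fixed $e\in C$ the equation $c\circ d=e$ has exactly $|C|$ solutions, namely $(c,d)=(e\circ d^{-1},d)$ with $d\in C$, and for each of them $\epsilon(c)\epsilon(d)=\epsilon(c\circ d)=\epsilon(e)$ since $\epsilon:C\to\mathcal{S}^{1}$ is a homomorphism. Collecting the terms belonging to a given $e$ gives $\epsilon\cdot\epsilon=\frac{1}{|C|}\sum_{e\in C}\epsilon(e)\,e=\epsilon$, so $\epsilon$ is idempotent; it obviously lies in $\mathbb{C}[C]\subseteq\mathbb{C}[\mathcal{S}_N]$.

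For (ii) the first step is to rewrite $\epsilon^{\ast}$. By definition $\epsilon^{\ast}=\frac{1}{|C|}\sum_{c\in C}\epsilon(c)c^{-1}$; substituting $c\mapsto c^{-1}$ and using $\epsilon(c^{-1})=\epsilon(c)^{-1}=\overline{\epsilon(c)}$, which holds because $\epsilon(c)\in\mathcal{S}^{1}$, yields $\epsilon^{\ast}=\frac{1}{|C|}\sum_{c\in C}\overline{\epsilon(c)}\,c$. If $u$ has the symmetry $(C,\epsilon)$, then $cu=\epsilon(c)u$ for all $c\in C$, hence by linearity of the symmetry operators and Proposition (ii),
\[
\epsilon^{\ast}u=\frac{1}{|C|}\sum_{c\in C}\overline{\epsilon(c)}\,cu=\frac{1}{|C|}\sum_{c\in C}\overline{\epsilon(c)}\,\epsilon(c)\,u=\frac{1}{|C|}\sum_{c\in C}|\epsilon(c)|^{2}u=u .
\]
Conversely, for a fixed $c_{0}\in C$ I would compute $c_{0}\cdot\epsilon^{\ast}=\frac{1}{|C|}\sum_{c\in C}\overline{\epsilon(c)}\,(c_{0}\circ c)$ and reindex by $e=c_{0}\circ c$, i.e. $c=c_{0}^{-1}\circ e$; then $\overline{\epsilon(c)}=\overline{\epsilon(c_{0}^{-1})}\,\overline{\epsilon(e)}=\epsilon(c_{0})\,\overline{\epsilon(e)}$, so that $c_{0}\cdot\epsilon^{\ast}=\epsilon(c_{0})\,\epsilon^{\ast}$. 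Therefore, if $\epsilon^{\ast}u=u$, then for every $c_{0}\in C$,
\[
c_{0}u=c_{0}(\epsilon^{\ast}u)=(c_{0}\cdot\epsilon^{\ast})u=\epsilon(c_{0})\,\epsilon^{\ast}u=\epsilon(c_{0})\,u ,
\]
which is exactly the condition that $u$ has the symmetry $(C,\epsilon)$.

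There is no serious obstacle: both parts are short algebraic manipulations in $\mathbb{C}[\mathcal{S}_N]$. The only points demanding care are the bookkeeping of inverses versus complex conjugates — in particular the identification $\epsilon^{\ast}=\frac{1}{|C|}\sum_{c\in C}\overline{\epsilon(c)}\,c$ — and the derivation of the eigenrelation $c\cdot\epsilon^{\ast}=\epsilon(c)\,\epsilon^{\ast}$, which is what turns the fixed-vector condition $\epsilon^{\ast}u=u$ back into the defining commutation relations. One may also note in passing that $\epsilon^{\ast}$ is again idempotent, being the image under the involution of the idempotent $\epsilon$ (since $(\epsilon\cdot\epsilon)^{\ast}=\epsilon^{\ast}\cdot\epsilon^{\ast}$), so that (ii) is consistent with the earlier characterisation of symmetry classes by $eu=u$.
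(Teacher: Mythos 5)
Your proof is correct. The paper itself offers no argument for this Proposition (it is quoted from \cite[p.~115]{fie16}), and your computation is exactly the standard one: part (i) by reindexing the double sum via the multiplicativity of $\epsilon$, and part (ii) by the key bookkeeping step that the paper's involution $a^{\ast}=\sum_p a_p p^{-1}$ (no conjugation of coefficients) turns $\epsilon$ into $\frac{1}{|C|}\sum_{c\in C}\overline{\epsilon(c)}\,c$, combined with the eigenrelation $c\cdot\epsilon^{\ast}=\epsilon(c)\,\epsilon^{\ast}$ and the rule $a(bw)=(a\cdot b)w$; this also makes clear why the statement involves $\epsilon^{\ast}$ rather than $\epsilon$. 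No gaps.
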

A list of all commutation symmetries belonging to subgroups of $\mathcal{S}_N$ with $N\le 6$ is given in \cite[Appendix A.1]{fie16}.

\section{Symmetry operators which commute with restrictions of the Hamiltonian} \label{sec4}%
It is very importand to find symmetry operators which commute with the Hamiltonian $H$ (in a certain sense).
\begin{Def} \label{def4.1}
Let $H = H_F , H_A$. Let $v\in\mathcal{H}^{(r_1,r_2)}$ be an eigenvector of $H$ and let $U\subseteq\mathcal{H}^{(r_1,r_2)}$ be a linear subspace spanned by eigenvectors of a fixed eigenvalue $\mu$ of $H$.
\begin{itemize}
\item[\rm (i)]{
We denote by $\mathcal{A}_v$ the stability subgroup
$\mathcal{A}_v := \{p\in\mathcal{S}_N\;|\;pv = v\}$.}
\item[\rm (ii)]{
We denote by $\mathcal{A}_U$ the stability subgroup
$\mathcal{A}_U := \{p\in\mathcal{S}_N\;|\;pU\subseteq U\}$.}
\end{itemize}
\end{Def}
The following statements can be proved by easy considerations:
\begin{Thm}
If $v$ is a fixed eigenvector of $H$ then
$\mathcal{A}_{\mathcal{L}_{\mathbb{C}}\{v\}}$ is the maximal subgroup of $\mathcal{S}_N$ which yields a commutaion symmetry $(\mathcal{A}_{\mathcal{L}_{\mathbb{C}}\{v\}},\epsilon)$ for $v$. Then $\ker\epsilon = \mathcal{A}_v$.
\end{Thm}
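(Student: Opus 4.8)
The plan is to read off the homomorphism $\epsilon$ directly from the scalars by which the elements of $\mathcal{A}_{\mathcal{L}_{\mathbb{C}}\{v\}}$ rescale $v$, to verify the three conditions in the definition of a commutation symmetry, and then to obtain maximality and the kernel statement by unwinding definitions. By definition a permutation $p$ lies in $\mathcal{A}_{\mathcal{L}_{\mathbb{C}}\{v\}}$ precisely when $pv\in\mathcal{L}_{\mathbb{C}}\{v\}$, i.e. when $pv=\epsilon(p)\,v$ for some scalar $\epsilon(p)\in\mathbb{C}$; since $v\neq 0$ this $\epsilon(p)$ is uniquely determined, so we obtain a well-defined map $\epsilon:\mathcal{A}_{\mathcal{L}_{\mathbb{C}}\{v\}}\to\mathbb{C}$. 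Using $p(qv)=(p\circ q)v$ (part (ii) of the Proposition on the properties of symmetry operators) one computes $\epsilon(p\circ q)\,v=(p\circ q)v=p\bigl(\epsilon(q)v\bigr)=\epsilon(p)\epsilon(q)\,v$, whence $\epsilon(p\circ q)=\epsilon(p)\epsilon(q)$; in particular $\mathcal{A}_{\mathcal{L}_{\mathbb{C}}\{v\}}$ is closed under composition (and, since each $\epsilon(p)\neq 0$, under inverses), so it is a subgroup and $\epsilon$ a group homomorphism.

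Next I would check that $\epsilon(p)\in\mathcal{S}^1$. Every $p\in\mathcal{S}_N$ acts unitarily on $\mathcal{H}$: it permutes the orthonormal basis $\mathcal{B}$, or, equivalently, part (iii) of the Proposition on symmetry operators applied with $a=p$ (so $\overline a{}^{\ast}=p^{-1}$) gives $\langle pv\,|\,pv\rangle=\langle v\,|\,p^{-1}pv\rangle=\langle v\,|\,v\rangle$. Hence $|\epsilon(p)|^{2}\langle v|v\rangle=\langle v|v\rangle$, and $v\neq 0$ forces $|\epsilon(p)|=1$. Being the homomorphic image of the finite group $\mathcal{A}_{\mathcal{L}_{\mathbb{C}}\{v\}}$, the set $\epsilon\bigl(\mathcal{A}_{\mathcal{L}_{\mathbb{C}}\{v\}}\bigr)$ is a finite subgroup of $\mathcal{S}^1$, so $(\mathcal{A}_{\mathcal{L}_{\mathbb{C}}\{v\}},\epsilon)$ is indeed a commutation symmetry for $v$. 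For the maximality claim, let $(C,\epsilon')$ be any commutation symmetry for $v$. Then $cv=\epsilon'(c)v\in\mathcal{L}_{\mathbb{C}}\{v\}$ for every $c\in C$, so $C\subseteq\mathcal{A}_{\mathcal{L}_{\mathbb{C}}\{v\}}$ (and $\epsilon'=\epsilon|_{C}$ by the uniqueness of the scalars); thus $\mathcal{A}_{\mathcal{L}_{\mathbb{C}}\{v\}}$ is the largest subgroup of $\mathcal{S}_N$ admitting a commutation symmetry for $v$, and that symmetry is the pair $(\mathcal{A}_{\mathcal{L}_{\mathbb{C}}\{v\}},\epsilon)$.

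Finally, since $pv=v$ trivially implies $pv\in\mathcal{L}_{\mathbb{C}}\{v\}$, we have $\mathcal{A}_v\subseteq\mathcal{A}_{\mathcal{L}_{\mathbb{C}}\{v\}}$, so $\epsilon$ is defined on $\mathcal{A}_v$, and for $p\in\mathcal{A}_{\mathcal{L}_{\mathbb{C}}\{v\}}$ one has $p\in\ker\epsilon\iff\epsilon(p)=1\iff pv=v\iff p\in\mathcal{A}_v$; hence $\ker\epsilon=\mathcal{A}_v$. I do not anticipate a genuine obstacle: every step is a direct consequence of the definitions and of the already established properties of symmetry operators, the only point requiring more than bookkeeping being $|\epsilon(p)|=1$, which rests on the unitarity of the $\mathcal{S}_N$-action on $\mathcal{H}$. (Note that the hypothesis that $v$ is an eigenvector of $H$ is nowhere used; the statement holds verbatim for every nonzero $v\in\mathcal{H}$.)
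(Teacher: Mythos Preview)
Your argument is correct and complete. The paper itself does not give a proof of this theorem at all; it merely prefaces it (together with the two theorems that follow) with the remark that ``the following statements can be proved by easy considerations,'' and your proposal spells out precisely those considerations in the natural way: read off $\epsilon$ from the scaling action on $\mathcal{L}_{\mathbb{C}}\{v\}$, use multiplicativity of the action for the homomorphism property, unitarity of permutations for $|\epsilon(p)|=1$, and unwind definitions for maximality and the kernel. Your closing observation that the eigenvector hypothesis is not actually used is also correct.
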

\begin{Thm}
Every $p\in\mathcal{A}_{\mathcal{L}_{\mathbb{C}}\{v\}}$ or $p\in\mathcal{A}_U$ commutes with $H|_{\mathcal{L}_{\mathbb{C}}\{v\}}$ or $H|_U$, respectively.
\end{Thm}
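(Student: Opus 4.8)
The plan is to prove the two claims separately, since they are structurally parallel but require slightly different inputs.

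\emph{The subspace case.} Suppose $p\in\mathcal{A}_U$, so that $pU\subseteq U$. First I would note that, since $p$ has finite order in $\mathcal{S}_N$, the inclusion $pU\subseteq U$ together with $\dim(pU)=\dim U$ (the action of $p$ on $\mathcal{H}$ is invertible) forces $pU=U$; hence $p$ restricts to a linear automorphism of $U$. Next I want to see that $p$, acting on $\mathcal{H}$, commutes with $H$ on all of $\mathcal{H}^{(r_1,r_2)}$, and in particular on $U$. This is where the key physical input enters: the permutation action $p|\sigma\rangle=|\sigma\circ p^{-1}\rangle$ must commute with $H=H_F$ (equivalently $H_A$). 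I would verify this by a direct computation on basis vectors, using the explicit form of $H_F$ as a sum $-J\sum_k\bigl[\tfrac12(S_k^+S_{k+1}^-+S_k^-S_{k+1}^+)+S_k^zS_{k+1}^z\bigr]$ with periodic boundary conditions: conjugating the local operators $S_k^\alpha$ by $p$ permutes the site indices, so $pHp^{-1}$ is the ``same'' Hamiltonian but with the interaction graph relabelled by $p$. Granting that, for any $u\in U$ we have $Hpu=pHu$; and since $U$ is spanned by eigenvectors of the single eigenvalue $\mu$, $Hu=\mu u$, so $H(pu)=p(\mu u)=\mu(pu)$, and $pu\in U$ because $pU=U$. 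Therefore $p$ maps $U$ to $U$ and commutes with $H$ there, i.e. $p|_U$ commutes with $H|_U$.

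\emph{The single-vector case.} For $p\in\mathcal{A}_{\mathcal{L}_{\mathbb{C}}\{v\}}$ we have $pv=\epsilon(p)v$ for the scalar $\epsilon(p)\in\mathcal{S}^1$ supplied by the commutation symmetry of the preceding theorem, so $\mathcal{L}_{\mathbb{C}}\{v\}$ is $p$-invariant and the argument above applies verbatim with $U=\mathcal{L}_{\mathbb{C}}\{v\}$: using $pHp^{-1}=H$ on $\mathcal{H}^{(r_1,r_2)}$ and $Hv=\mu v$ we get $H(pv)=\mu(pv)$, and since $pv$ is a scalar multiple of $v$ this is automatic, but the point is that multiplication by the scalar $\epsilon(p)$ commutes with $H|_{\mathcal{L}_{\mathbb{C}}\{v\}}$ trivially. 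So $p|_{\mathcal{L}_{\mathbb{C}}\{v\}}$ commutes with $H|_{\mathcal{L}_{\mathbb{C}}\{v\}}$.

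The only nontrivial ingredient, and the step I would spell out most carefully, is the global commutation relation $pH=Hp$ on $\mathcal{H}$ (or at least on each weight space $\mathcal{H}^{(r_1,r_2)}$). The cyclic boundary condition is essential here: the Hamiltonian is built from the \emph{ring} of bonds $\{k,k+1\}$, and this bond set is invariant only under the dihedral relabellings, yet the claim as stated is for \emph{every} $p\in\mathcal{S}_N$ lying in the relevant stability subgroup — and indeed any such $p$ automatically lies in the subgroup preserving $v$ or $U$, on which $H$ is already diagonal (resp. acts as $\mu\cdot\mathrm{id}$), so the commutation with the \emph{restriction} $H|_U$ holds even when $p$ does not commute with $H$ on the whole of $\mathcal{H}$. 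Thus the cleanest route avoids claiming global commutation: from $pU=U$ and $Hu=\mu u$ for all $u$ in a spanning set of $U$ we get $H|_U=\mu\cdot\mathrm{id}_U$, which commutes with the automorphism $p|_U$ for the trivial reason that every operator commutes with a scalar. I would present exactly this short argument, and remark that it is precisely the hypothesis ``$U$ spanned by eigenvectors of a fixed eigenvalue'' (resp. ``$v$ an eigenvector'') that makes the statement elementary.
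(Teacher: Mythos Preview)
Your final argument is correct and is precisely the ``easy consideration'' the paper has in mind: since $U$ (resp.\ $\mathcal{L}_{\mathbb{C}}\{v\}$) is by hypothesis spanned by eigenvectors of a single eigenvalue $\mu$, the restriction $H|_U$ is the scalar operator $\mu\cdot\mathrm{id}_U$, and any $p\in\mathcal{A}_U$ restricts to an endomorphism of $U$ which automatically commutes with a scalar. The paper itself gives no proof beyond the remark that the statement ``can be proved by easy considerations'', so your clean route is exactly what is intended.

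What you should drop is the first half of the proposal. The attempt to establish a global relation $pHp^{-1}=H$ on $\mathcal{H}$ is not only unnecessary, it is false for most $p\in\mathcal{S}_N$: as you yourself observe, the ring Hamiltonian is invariant only under the dihedral subgroup, whereas the stability groups $\mathcal{A}_U$ appearing in the paper's examples are typically not contained in it (e.g.\ $\mathcal{A}_{U_0^{(2,2)}}$ is dihedral, but $\mathcal{A}_{\mathcal{L}_{\mathbb{C}}\{v_7\}}$ is not). Presenting that detour and then retracting it muddies an otherwise one-line proof. State directly that $H|_U=\mu\cdot\mathrm{id}_U$ and that $pU\subseteq U$ makes $p|_U$ a well-defined endomorphism of $U$, hence commutes with the scalar; the observation $pU=U$ is true but not needed.
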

\begin{Thm}
Let $\mu$ be an eigenvalue of the Hamiltonian $H$ and denote by $U_{\mu}$ the eigenspace belonging to $\mu$. Consider $U_{\mu}^{(r_1,r_2)} := U_{\mu}\cap\mathcal{H}^{(r_1,r_2)}$. Let $e$ be a generating idempotent of the right ideal
$\mathcal{R}_{U_{\mu}^{(r_1,r_2)}}$. Then it holds
$H|_{U_{\mu}^{(r_1,r_2)}}\cdot e = e\cdot H|_{U_{\mu}^{(r_1,r_2)}}$.
\end{Thm}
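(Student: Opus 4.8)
The plan is to reduce the assertion to two elementary facts: that $e$ acts as the identity on $U:=U_\mu^{(r_1,r_2)}$, and that $H$ acts on $U$ as the scalar $\mu$; granting both, the two compositions in the statement are literally the same operator on $U$. So first I would note that $U$ is an $H$-invariant subspace, being the intersection of the eigenspace $U_\mu$ (which is $H$-invariant by definition of an eigenspace) with $\mathcal{H}^{(r_1,r_2)}$ (which is $H$-invariant since $H_F$ and $H_A$ preserve $\mathcal{B}^{(r_1,r_2)}$). Hence $H|_U$ is a well-defined endomorphism of $U$, and since every $u\in U\subseteq U_\mu$ satisfies $Hu=\mu u$, we get $H|_U=\mu\,\mathrm{Id}_U$.

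Next I would invoke the two preceding results of Section~\ref{sec3}. Fixing a basis $\{v_1,\ldots,v_l\}$ of $U$, the right ideal $\mathcal{R}_{U_\mu^{(r_1,r_2)}}$ is by definition $\sum_{k=1}^l\mathcal{R}_{v_k}$, and the theorem characterising the smallest symmetry class of a subspace gives $U\subseteq\mathcal{H}_{\mathcal{R}_{U_\mu^{(r_1,r_2)}}}$. Since $e$ is a generating idempotent of $\mathcal{R}_{U_\mu^{(r_1,r_2)}}$, the proposition describing symmetry classes through a generating idempotent yields $\mathcal{H}_{\mathcal{R}_{U_\mu^{(r_1,r_2)}}}=\{u\in\mathcal{H}\mid eu=u\}$, so $eu=u$ for every $u\in U$. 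In particular $e(U)\subseteq U$, so $e$ restricts to an endomorphism of $U$, the compositions $H|_{U_\mu^{(r_1,r_2)}}\cdot e$ and $e\cdot H|_{U_\mu^{(r_1,r_2)}}$ are meaningful operators on $U$, and on $U$ they equal $\mu\,\mathrm{Id}_U\circ\mathrm{Id}_U$ and $\mathrm{Id}_U\circ\mu\,\mathrm{Id}_U$ respectively; these coincide, which is the claim.

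The argument is genuinely short, so there is no real computational obstacle; the one step that deserves attention — and it is the conceptual point rather than a hard computation — is $e(U)\subseteq U$. This is not visible from the explicit form of $e$ delivered by the decomposition algorithm of \cite{fie16,fie18}; it rests precisely on the equivalence $u\in\mathcal{H}_{\mathcal{R}}\Leftrightarrow eu=u$ together with the inclusion $U\subseteq\mathcal{H}_{\mathcal{R}_{U_\mu^{(r_1,r_2)}}}$, i.e.\ on the fact that $U$ lies in the symmetry class it itself generates. Everything else (invariance of $U$ under $H$, the eigenvalue equation $Hu=\mu u$ on $U$, and the trivial commutation of two scalar-type operators on $U$) is routine.
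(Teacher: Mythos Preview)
Your argument is correct and is precisely the kind of ``easy consideration'' the paper alludes to (the paper does not spell out a proof for this theorem, merely grouping it with two others under the remark that they ``can be proved by easy considerations''). The essential ingredients you use---$U_{\mu}^{(r_1,r_2)}\subseteq\mathcal{H}_{\mathcal{R}_{U_{\mu}^{(r_1,r_2)}}}$ from the theorem on the smallest symmetry class of a subspace, together with the characterisation $u\in\mathcal{H}_{\mathcal{R}}\Leftrightarrow eu=u$ from the proposition on generating idempotents---are exactly the tools the paper has set up for this purpose, and your observation that $e|_U=\mathrm{Id}_U$ is the heart of the matter.

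One small remark: once you know $e|_U=\mathrm{Id}_U$, the commutation is immediate regardless of what $H|_U$ looks like; the additional fact $H|_U=\mu\,\mathrm{Id}_U$ is not needed for the commutation itself (identity commutes with everything), only for confirming that $H|_U$ is a well-defined endomorphism of $U$, and even that already follows from $U\subseteq U_{\mu}$. This does not affect correctness, but you could streamline the write-up accordingly.
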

\section{Eigenvectors with reduced symmetry classes} \label{sec4a}%
When we determine $\mathcal{R}_v$ and $\mathcal{H}_{\mathcal{R}_v}$ for all eigenvectors $v$ of $H$ from a space $U_{\mu}^{(r_1,r_2)}$ then the dimensions
$\dim\mathcal{R}_v$ and $\dim\mathcal{H}_{\mathcal{R}_v}$ are constant almost everywhere on $U_{\mu}^{(r_1,r_2)}$. However, it is possible that $U_{\mu}^{(r_1,r_2)}$ contains certain linear subspaces on which these dimensions jump to smaller values. We describe, how one can determine such jumps.

Let $\{v_1,\ldots,v_k\}\subset\mathcal{H}^{(r_1,r_2)}$ be a set of linearly independent eigenvectors of a fixed eigenvalue $\mu$ of $H = H_F, H_A$ and
$U := \mathcal{L}_{\mathbb{C}}\{v_1,\ldots,v_k\}$. We consider the family
\begin{equation}
v = v(x_1,\ldots,x_k) = \sum_{l=1}^k x_l\cdot v_l\;\;\;,\;\;\;x_1,\ldots,x_k\in\mathbb{C}
\end{equation}
and investigate for every $v$ the right ideal $\mathcal{R}_v$ according to Theorem \ref{thm3.13} which defines the smallest symmetry class containing $v$. We express every $v_l$ in the form (\ref{e8}), i.e.
$v_l = a_{v_l}\cdot 1_{\mathcal{S}_{r_1,r_2}} |{\tau}_0\rangle$. The idempotent $1_{\mathcal{S}_{r_1,r_2}}$ generates a left ideal
$\mathbb{C}[\mathcal{S}_N]\cdot 1_{\mathcal{S}_{r_1,r_2}}$ which is the representation space of the Littlewood-Richardson product $[r_1][r_2]$. Since
$[r_1][r_2] = [r_2][r_1]$
we may assume $r_1\ge r_2$ w.l.o.g. Then the Littlewood-Richardson rule yields
\begin{equation}
[r_1][r_2]\sim [N] + [N-1,1] + [N-2,2] + \ldots + [N-r_2,r_2]\,.
\end{equation}
Consequently, $1_{\mathcal{S}_{r_1,r_2}}$ possesses a decomposition
\begin{equation}
1_{\mathcal{S}_{r_1,r_2}} = e_{(N)} + e_{(N-1,1)} + e_{(N-2,2)} +\ldots + e_{(N-r_2,r_2)}
\end{equation}
into pairwise orthogonal, primitive idempotents $e_{(N-m,m)}$ belonging to the partitions $(N-m,m)\vdash N$. The idempotents $e_{(N-m,m)}$ can be calculated by the formula
\begin{equation}
e_{(N-m,m)} = D^{-1}\left(
\begin{array}{ccc}
0 & & \\
 & D_{(N-m,m)}(1_{\mathcal{S}_{r_1,r_2}}) & \\
 & & 0 \\
\end{array}
\right)
\end{equation} where $D$ is a discrete Fourier transform of $\mathcal{S}_N$ and $D_{\lambda}$ the natural projection of $D$ belonging to the partition $\lambda\vdash N$ (see \cite{clausbaum1} or \cite[p.~27]{fie16}). We use {\it Young's natural representation} \cite[p.~51]{fie16} as discrete Fourier transform which is implemented in our {\sf Mathematica} package {\sf PERMS} \cite{fie10}.

Because of Theorem \ref{thm3.13} the smallest symmetry class which contains $v$ is defined by the right ideal
\begin{equation} \label{e17}%
\mathcal{R}_v = \bigoplus_{m = 0}^{r_2} \left(\sum_{l=1}^k x_l\,a_{v_l}\right)\cdot e_{(N-m,m)}\cdot\mathbb{C}[\mathcal{S}_N]\,.
\end{equation}
The outer sum is direct since every summand
$\left(\sum_{l=1}^k x_l\,a_{v_l}\right)\cdot e_{(N-m,m)}\cdot\mathbb{C}[\mathcal{S}_N]$ is a (minimal) subideal of a minimal two-sided ideal $J_{(N-m,m)}$ occuring in the decomposition
$\mathbb{C}[\mathcal{S}_N] = \bigoplus_{\lambda\vdash N} J_{\lambda}$ of the group ring $\mathbb{C}[\mathcal{S}_N]$ into minimal two-sided ideals $J_{\lambda}$.

Now we investigate for every $e_{(N-m,m)}$ whether the linear equation system
\begin{equation} \label{e18}%
\sum_{l=1}^k x_l\,a_{v_l}\cdot e_{(N-m,m)} = 0
\end{equation}
has a solution $\tilde{x} = (\tilde{x}_1,\ldots,\tilde{x}_k)\not= 0$. If ''yes'', then a summand in (\ref{e17}) vanishes for $\tilde{x}$ and the symmetry class $\mathcal{H}_{\mathcal{R}_{\tilde{v}}}$ of $\tilde{v}$ belonging to $\tilde{x}$ is smaller the symmetry class of
$\mathcal{H}_{\mathcal{R}_v}$ in the generic case in which no summand is missing in (\ref{e17}).

If (\ref{e18}) has non-vanishing solutions for $m$ and $m'$, then we can investigate whether the union of the two systems
\begin{equation}
\sum_{l=1}^k x_l\,a_{v_l}\cdot e_{(N-m,m)} = 0\;\;\;,\;\;\;
\sum_{l=1}^k x_l\,a_{v_l}\cdot e_{(N-m',m')} = 0
\end{equation}
possesses still a non-vanishing solution. Such a solution cancels two summands in (\ref{e17}). A continuation of this process can yield a further reduction of (\ref{e17}).

Note that (\ref{e18}) leads to a system of $N!$ linear equations for $x_1,\ldots,x_k$. However, a discrete Fourier transform $D$ for $\mathcal{S}_N$ transforms (\ref{e18}) into smaller, equivalent system of $n_{\lambda}^2$ linear equations for $x_1,\ldots,x_k$, where $n_{\lambda}$ can be calculated from $\lambda = (N-m,m)$ by means of the hook length formula (see \cite[p.~38]{fie16}). One obtains $n_{(N-m,m)} = \frac{N!}{(N-m+1)!\,m!}\cdot (N-2m+1)$.

\section{Examples in the case $N = 4$} \label{sec5}%
Now we give some examples in the case $N = 4$ which we calculated by means of the {\sf Mathematica} packages {\sf PERMS} \cite{fie10} and {\sf HRing} \cite{fie07a}.
Instead of $H_F$ and $H_A$ we consider
\begin{equation}
\tilde{H} = -\textstyle{\frac{1}{J}} H_F = \textstyle{\frac{1}{J}} H_A.
\end{equation}
We calculate the eigenvalues $\mu$ and eigenvectors $v$ of $\tilde{H}$ by numerical methods (see Sec.~\ref{sec2}) and determine then stability subgroups
$\mathcal{A}_v$ and $\mathcal{A}_{\mathcal{L}_{\mathbb{C}}\{v\}}$ according to Def.~\ref{def4.1} by means of an algorithm from \cite[p.~28]{butler} which is implemented in our {\sf Mathematica}-package {\sf PERMS} \cite{fie10}. The following table shows the results.
\begin{center}
{\small
\begin{tabular}{r|c|lc|c|c}
$\mu$ & $(r_1,r_2)$ & & eigenvector $v$ & $|\mathcal{A}_v|$ & $|\mathcal{A}_{\mathcal{L}_{\mathbb{C}}\{v\}}|$ \\
\hline
-2 & (2,2) & $v_1$: & $|1 1 2 2 \rangle - 2 |1 2 1 2 \rangle + |1 2 2 1 \rangle + |2 1 1 2 \rangle - 2 |2 1 2 1 \rangle + |2 2 1 1 \rangle$ & 8 & $8$\\
\hline
-1 & (1,3) & $v_2$: & $-|1 2 2 2 \rangle + |2 1 2 2 \rangle - |2 2 1 2 \rangle + |2 2 2 1 \rangle$ & 4 & 8\\
-1 & (2,2) & $v_3$: & $-|1 2 1 2 \rangle + |2 1 2 1 \rangle$ & 4 & 8\\
-1 & (3,1) & $v_4$: & $-|1 1 1 2 \rangle + |1 1 2 1 \rangle - |1 2 1 1 \rangle + |2 1 1 1 \rangle$ & 4 & 8\\
\hline
0 & (1,3) & $v_5$: & $-|2 1 2 2 \rangle + |2 2 2 1 \rangle$ & 2 & 4\\
0 & (1,3) & $v_6$: & $-|1 2 2 2 \rangle + |2 2 1 2 \rangle$ & 2 & 4\\
0 & (2,2) & $v_7$: & $-|1 1 2 2 \rangle + |2 2 1 1 \rangle$ & 4 & $8$\\
0 & (2,2) & $v_8$: & $-|1 1 2 2 \rangle + |2 1 1 2 \rangle$ & 1 & $2$\\
0 & (3,1) & $v_9$: & $-|1 1 2 1 \rangle + |2 1 1 1 \rangle$ & 2 & 4\\
0 & (2,2) & $v_{10}$: & $-|1 1 2 2 \rangle + |1 2 2 1 \rangle$ & 1 & $2$\\
0 & (3,1) & $v_{11}$: & $-|1 1 1 2 \rangle + |1 2 1 1 \rangle$ & 2 & 4\\
\hline
1 & (0,4) & $v_{12}$: & $|2 2 2 2 \rangle$ & 24 & 24\\
1 & (1,3) & $v_{13}$: & $|1 2 2 2 \rangle + |2 1 2 2 \rangle + |2 2 1 2 \rangle + |2 2 2 1 \rangle$ & 24 & 24\\
1 & (2,2) & $v_{14}$: & $|1 1 2 2 \rangle + |1 2 1 2 \rangle + |1 2 2 1 \rangle + |2 1 1 2 \rangle + |2 1 2 1 \rangle + |2 2 1 1 \rangle$ & 24 & 24\\
1 & (3,1) & $v_{15}$: & $|1 1 1 2 \rangle + |1 1 2 1 \rangle + |1 2 1 1 \rangle + |2 1 1 1 \rangle$ & 24 & 24\\
1 & (4,0) & $v_{16}$: & $|1 1 1 1 \rangle$ & 24 & 24\\
\hline
\end{tabular}
}
\end{center}
We read from the table:
\begin{itemize}
\item[a)] All eigenvectors $v_{12},\ldots,v_{16}$ of $\mu = 1$ are fixed points of every $p\in\mathcal{S}_4$. Consequently, the complete eigenspace $U_0$ of $\mu = 0$ consists of fixed points of $\mathcal{S}_4$.
\item[b)] There are eigenvectors which are fixed points only of
$\mathrm{id} = (1 2 3 4)\in\mathcal{S}_4$ (see $v_8$, $v_{10}$).
\end{itemize}
The permutation $t := (4 1 2 3)$ represents the {\it translation operator}
$|\sigma\rangle\mapsto t |\sigma\rangle = |\sigma\circ t^{-1}\rangle$ on $\mathcal{H}$. It generates the cyclic group $C_4 = \{(1 2 3 4), (2 3 4 1), (3 4 1 2), (4 1 2 3)\}$. But only the groups $\mathcal{A}_{\mathcal{L}_{\mathbb{C}}\{v\}}$ of $v_1$ and $v_{12},\ldots,v_{16}$ contain $C_4$, i.e. many single eigenvectors have no translational symmetry. Even the group $\mathcal{A}_{\mathcal{L}_{\mathbb{C}}\{v_7\}}$ of order 8 does not contain $C_4$.
\begin{center}
{\small
\begin{tabular}{c|c}
$v$ & $\mathcal{A}_{\mathcal{L}_{\mathbb{C}}\{v\}}$ \\
\hline
$v_1$ & $\{( 1 2 3 4 ), ( 1 4 3 2 ), ( 2 1 4 3 ), ( 4 1 2 3 ), ( 2 3 4 1 ), ( 4 3 2 1 ), ( 3 2 1 4 ), ( 3 4 1 2 )\}$ \\
\hline
$v_7$ & $\{( 1 2 3 4 ), ( 1 2 4 3 ), ( 2 1 3 4 ), ( 3 4 1 2 ), ( 2 1 4 3 ), ( 4 3 1 2 ), ( 3 4 2 1 ), ( 4 3 2 1 )\}$ \\
$v_8$ & $\{( 1 2 3 4 ), ( 3 2 1 4 )\}$ \\
$v_{10}$ & $\{( 1 2 3 4 ), ( 1 4 3 2 )\}$ \\
\end{tabular}
}
\end{center}
However, it holds:
\begin{Prop}
For every space $U_{\mu}^{(r_1,r_2)}$ the stability subgroup
$\mathcal{A}_{U_{\mu}^{(r_1,r_2)}}$ contains $C_4$. 
\end{Prop}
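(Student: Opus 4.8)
The plan is to exploit the translational invariance of the Heisenberg Hamiltonian. The key observation is that the translation operator $t=(4\,1\,2\,3)$ commutes with $H=H_F=\pm H_A$ on the whole of $\mathcal{H}$, and at the same time preserves every weight subspace $\mathcal{H}^{(r_1,r_2)}$. Since $U_\mu^{(r_1,r_2)}=U_\mu\cap\mathcal{H}^{(r_1,r_2)}$ is an intersection of an eigenspace $U_\mu$ of $H$ with a weight subspace, it is therefore mapped into itself by $t$, and hence by the cyclic group $C_4=\langle t\rangle$.

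First I would record how $t$ acts on the basis. By the definition $p|\sigma\rangle=|\sigma\circ p^{-1}\rangle$ and because $t^{-1}$ is the $N$-cycle $k\mapsto k+1$ (mod $N$), the vector $t|\sigma\rangle$ is obtained from $|\sigma\rangle$ by cyclically shifting all spins by one site. In particular $t$ permutes $\mathcal{B}^{(r_1,r_2)}$ among itself --- a cyclic shift of a configuration changes neither the number of $1$'s nor the number of $2$'s --- so $t\,\mathcal{H}^{(r_1,r_2)}=\mathcal{H}^{(r_1,r_2)}$.

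Next I would check $tH=Ht$. Writing $H_F=-J\sum_{k=1}^N h_{k}$ with $h_k=\frac12(S_k^{+}S_{k+1}^{-}+S_k^{-}S_{k+1}^{+})+S_k^{z}S_{k+1}^{z}$ and periodic indices, it suffices to verify that conjugation by $t$ shifts the site index of the elementary spin operators, $t\,S_k^{\alpha}\,t^{-1}=S_{k+1}^{\alpha}$ for $\alpha\in\{+,-,z\}$ and all $k$ mod $N$; this is immediate from the explicit action of $S_k^{\alpha}$ on the basis vectors together with the shift description of $t$ above. Consequently $t\,h_k\,t^{-1}=h_{k+1}$, and since the sum $\sum_{k=1}^N h_k$ runs over a full period of the cyclic index it is unchanged, i.e. $tH_Ft^{-1}=H_F$, hence $tH t^{-1}=H$ for $H=H_F$ and for $H=H_A=-H_F$.

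Finally I would assemble the pieces: from $tH=Ht$ it follows that $t$ maps the eigenspace $U_\mu$ onto itself, and combined with $t\,\mathcal{H}^{(r_1,r_2)}=\mathcal{H}^{(r_1,r_2)}$ this gives $t\,U_\mu^{(r_1,r_2)}=U_\mu^{(r_1,r_2)}$, i.e. $t\in\mathcal{A}_{U_\mu^{(r_1,r_2)}}$. Since $\mathcal{A}_{U_\mu^{(r_1,r_2)}}$ is closed under composition (if $pU\subseteq U$ and $qU\subseteq U$ then $(p\circ q)U\subseteq U$) and is finite, hence a subgroup, and since $C_4$ is generated by $t$, we conclude $C_4\subseteq\mathcal{A}_{U_\mu^{(r_1,r_2)}}$. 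The only step requiring genuine care is the commutation $tH=Ht$, i.e. verifying the index-shift identity $t\,S_k^{\alpha}\,t^{-1}=S_{k+1}^{\alpha}$ against the convention $p|\sigma\rangle=|\sigma\circ p^{-1}\rangle$; everything else is bookkeeping.
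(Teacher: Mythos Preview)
Your argument is correct, and it is genuinely different from the paper's. The paper proves this proposition purely computationally for $N=4$: it inspects the explicit list of eigenvectors in the table, observes directly that $tU\subseteq U$ for each $U_{\mu}^{(r_1,r_2)}$ with $\mu\in\{-2,-1\}$ and for $U_0^{(1,3)}$, $U_0^{(3,1)}$, notes that the $\mu=1$ case is trivial, and handles the remaining space $U_0^{(2,2)}$ by a computer calculation that outputs $\mathcal{A}_{U_0^{(2,2)}}$ explicitly. Your route instead invokes the structural fact that the periodic Heisenberg Hamiltonian is translation invariant, $tHt^{-1}=H$, together with the observation that any permutation preserves each weight subspace; from this the invariance of every $U_\mu^{(r_1,r_2)}$ under $t$ follows immediately, with no need to know the eigenvectors. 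Your argument is cleaner, works for arbitrary $N$ (with $C_N$ in place of $C_4$), and explains \emph{why} the proposition holds rather than merely verifying it. The paper's approach, by contrast, yields the full groups $\mathcal{A}_{U_\mu^{(r_1,r_2)}}$ as a by-product, which your argument does not.

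One small caution: with the convention $p|\sigma\rangle=|\sigma\circ p^{-1}\rangle$ and $t=(4\,1\,2\,3)$ one actually gets $t\,S_k^{\alpha}\,t^{-1}=S_{k-1}^{\alpha}$ rather than $S_{k+1}^{\alpha}$; but since conjugation by $t$ still permutes the summands $h_k$ cyclically, the conclusion $tH=Ht$ is unaffected. You already flagged this step as the one needing care, so just check the direction when you write it out.
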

\begin{proof}
For $\mu = 1$ the statement is obviously correct.
Further we can read from our table of eigenvectors that $tU\subseteq U$ for
$U = U_{-2}^{(2,2)}, U_{-1}^{(1,3)}, U_{-1}^{(2,2)}, U_{-1}^{(3,1)}, U_{0}^{(1,3)}, U_{0}^{(3,1)}$. Finally, a computer calculation yields
$\mathcal{A}_{U_0^{(2,2)}} = \{
( 1 2 3 4 ), ( 1 4 3 2 ), ( 2 1 4 3 ), ( 4 1 2 3 ), ( 2 3 4 1 ), ( 4 3 2 1 ), ( 3 2 1 4 ), ( 3 4 1 2 )\}$ for $U_0^{(2,2)} = \mathcal{L}_{\mathbb{C}}\{v_7, v_8, v_{10}\}$.
\end{proof}

Now we consider single eigenvectors $v$ of weight $(2,2)$ and investigate the right ideals $\mathcal{R}_v$ of their smallest symmetry class and the right ideal
$1_{\mathcal{A}_v}\cdot\mathbb{C}[\mathcal{S}_4]$ of the symmetry class given by the commutation symmetry
$(\mathcal{A}_v,1)$ (see Prop.~\ref{prop2.14}). The next table shows the structure of their decompositions into minimal right ideals. Clearly, the ideals $\mathcal{R}_v$ are ''much smaller'' then the ideals $1_{\mathcal{A}_v}\cdot\mathbb{C}[\mathcal{S}_4]$.
\begin{center}
{\small
\begin{tabular}{c|c|c|c}
$\mu$ & $v$  & $\mathcal{R}_v$ & $1_{\mathcal{A}_v}\cdot\mathbb{C}[\mathcal{S}_4]$ \\
\hline
$-2$ & $v_1$ & $[2\,2]$ & $[4] + [2\,2]$\\
\hline
$-1$ & $v_3$ & $[3\,1]$ & $[4] + [3\,1] + [2\,2]$\\
\hline
$0$ & $v_7$ & $[3\,1]$ & $[4] + [3\,1] + [2\,2]$\\
$0$ & $v_8$ & $[3\,1] + [2\,2]$ & $\mathbb{C}[\mathcal{S}_4]\sim [4] + 3 [3\,1] + 2 [2\,2] + 3 [2\,1\,1\,1] + [1\,1\,1\,1]$ \\
$0$ & $v_{10}$ & $[3\,1] + [2\,2]$ & $\mathbb{C}[\mathcal{S}_4]\sim [4] + 3 [3\,1] + 2 [2\,2] + 3 [2\,1\,1\,1] + [1\,1\,1\,1]$ \\
\hline
$1$ & $v_{14}$ & $[4]$ & $[4]$ \\
\end{tabular}
}
\end{center}
Very interesting is the space $U_0^{(2,2)} := \mathcal{L}_{\mathbb{C}}\{v_7, v_8, v_{10}\}$. Using the decomposition algorithm from \cite[Chap.~I]{fie16} or \cite{fie18} we decompose
$\mathcal{R}_{U_0^{(2,2)}} = \mathcal{R}_{v_7} + \mathcal{R}_{v_8} + \mathcal{R}_{v_{10}}$ into minimal right ideals.
\begin{Prop}
For $U_0^{(2,2)} := \mathcal{L}_{\mathbb{C}}\{v_7, v_8, v_{10}\}$ we have
$\mathcal{R}_{U_0^{(2,2)}}\sim 2 [3\,1] + [2\,2]$.
\end{Prop}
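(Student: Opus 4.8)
The plan is to translate the claim into a statement about the $\mathcal S_4$-isotypic and then $D_4$-isotypic structure of $U:=U_0^{(2,2)}=\mathcal L_{\mathbb C}\{v_7,v_8,v_{10}\}$ and to read off the multiplicities.

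First I would write the three vectors in the form~(\ref{e8}). Since $1_{\mathcal S_{2,2}}|{\tau}_0\rangle=|{\tau}_0\rangle$ with $|{\tau}_0\rangle=|1122\rangle$, and each basis ket occurring in $v_7,v_8,v_{10}$ is the image of $|{\tau}_0\rangle$ under a single lexicographically minimal permutation, one finds (cycle notation)
\[
a_{v_7}=(1\,3)(2\,4)-\mathrm{id},\qquad a_{v_8}=(1\,2\,3)-\mathrm{id},\qquad a_{v_{10}}=(2\,4\,3)-\mathrm{id}.
\]
Because each $v_k$ has coefficient sum $0$, we get $a_{v_k}e_{(4)}=\bigl(\textstyle\sum_\sigma (v_k)_\sigma\bigr)e_{(4)}=0$ ($e_{(4)}=1_{\mathcal S_4}$ being central), so $\mathcal R_U$ has no $[4]$-part; using the splitting $1_{\mathcal S_{2,2}}=e_{(4)}+e_{(3,1)}+e_{(2,2)}$ and Theorem~\ref{thm3.13},
\[
\mathcal R_U=\bigoplus_{m=0}^{2}\ \sum_{k\in\{7,8,10\}}(a_{v_k}e_{(4-m,m)})\,\mathbb C[\mathcal S_4],
\]
where the outer sum is direct (the $m$-th term lies in the block $J_{(4-m,m)}$) and the inner sum is a non-direct sum of at most three minimal right ideals. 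Through the module isomorphism $\mathcal H^{(2,2)}\to\mathbb C[\mathcal S_4]\,1_{\mathcal S_{2,2}}$, $w\mapsto a_w1_{\mathcal S_{2,2}}$ (uniqueness of $a_w$), and the matrix-block structure of $\mathbb C[\mathcal S_4]$, one checks that the multiplicity $c_\lambda$ of $[\lambda]$ in $\mathcal R_U$ equals $\dim\pi_\lambda(U)$, where $\pi_\lambda$ is the $\mathcal S_4$-isotypic projector of $\mathcal H^{(2,2)}\cong[4]\oplus[3\,1]\oplus[2\,2]$ onto its simple component $W_\lambda$. So it remains to show $\dim\pi_{(3,1)}(U)=2$ and $\dim\pi_{(2,2)}(U)=1$. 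Now $\pi_{(4)}(U)=0$ is automatic ($v_k\perp v_{14}$ and $W_{(4)}=\mathcal L_{\mathbb C}\{v_{14}\}$), and the table of the $\mathcal R_v$ above gives $\mathcal R_{v_7}\sim[3\,1]$ and $\mathcal R_{v_8}\sim[3\,1]+[2\,2]$, i.e. $v_7\in W_{(3,1)}$ and $\pi_{(2,2)}(v_8)\ne0$.

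Next I would use that $U$ is a module over $\mathcal A_U$, which by the computation in the proof of the preceding Proposition is the dihedral group $D_4$ of order $8$. Restricting the $\mathcal S_4$-isotypic pieces to $D_4$ (quickest from the $D_4$-orbit structure of $\mathcal B^{(2,2)}$: one orbit of size $4$ and one of size $2$) gives $W_{(4)}|_{D_4}=\mathbf 1$, $W_{(3,1)}|_{D_4}=\chi\oplus\rho$, $W_{(2,2)}|_{D_4}=\mathbf 1\oplus\psi$, where $\rho$ is the $2$-dimensional irreducible of $D_4$ and $\chi,\psi$ are distinct nontrivial linear characters with $\chi\bigl((1\,2)(3\,4)\bigr)=-1$. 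In particular $W_{(3,1)}\oplus W_{(2,2)}$ is multiplicity-free over $D_4$, so every $D_4$-submodule is a direct sum of a subset of the isotypic pieces $V_\chi\subseteq W_{(3,1)}$, $V_{\mathbf 1},V_\psi\subseteq W_{(2,2)}$, $V_\rho\subseteq W_{(3,1)}$. Since $v_7\in W_{(3,1)}$ is fixed by $p:=(1\,2)(3\,4)\in D_4$ while $p$ acts on $V_\chi$ by $\chi(p)=-1$, writing $v_7=x_\chi+x_\rho$ forces $x_\chi=0$, so $0\ne v_7\in V_\rho\cap U$, hence $V_\rho\subseteq U$ by irreducibility of $V_\rho$. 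As $\dim U=3$ and $\pi_{(4)}(U)=0$, we get $U=V_\rho\oplus L$ with $L$ one of the three linear pieces; $L=V_\chi$ is excluded, since it would give $U\subseteq W_{(3,1)}$ and hence $\pi_{(2,2)}(v_8)=0$, contradicting the table. Therefore $L\subseteq W_{(2,2)}$, so $\pi_{(3,1)}(U)=V_\rho$ is $2$-dimensional and $\pi_{(2,2)}(U)=L$ is $1$-dimensional, i.e. $\mathcal R_{U_0^{(2,2)}}\sim 2[3\,1]+[2\,2]$.

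The bookkeeping -- the explicit $a_{v_k}$, the vanishing of the $[4]$-part, the identity $c_\lambda=\dim\pi_\lambda(U)$ -- is routine. I expect the main obstacle to be the two representation-theoretic ingredients: the $D_4$-branching of the $\mathcal S_4$-isotypic components of $\mathcal H^{(2,2)}$ together with its multiplicity-freeness, and the geometric step that places $V_\rho$ inside $U$ and excludes the alternative $U\subseteq W_{(3,1)}$ (which would yield $3[3\,1]$) -- it is here that the concrete eigenvectors $v_7,v_8,v_{10}$ and the single-eigenvector table are genuinely used, the answer not being forced by dimensions alone. Equivalently, and more directly, one may simply feed $\{a_{v_7}1_{\mathcal S_{2,2}},a_{v_8}1_{\mathcal S_{2,2}},a_{v_{10}}1_{\mathcal S_{2,2}}\}$ to the decomposition algorithm of \cite[Chap.~I]{fie16}/\cite{fie18} implemented in {\sf PERMS}, which returns a generating idempotent of $\mathcal R_{U_0^{(2,2)}}$ and a refinement into pairwise orthogonal primitive idempotents of type $2[3\,1]+[2\,2]$.
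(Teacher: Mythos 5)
Your argument is correct, but it is genuinely different from what the paper does: the paper offers no hand proof at all -- it simply forms $\mathcal{R}_{U_0^{(2,2)}}=\mathcal{R}_{v_7}+\mathcal{R}_{v_8}+\mathcal{R}_{v_{10}}$ and runs the ideal-decomposition algorithm of \cite[Chap.~I]{fie16}/\cite{fie18} in {\sf PERMS}, so the Proposition is reported as a computational result (your closing remark describes exactly this route). Your proof replaces the computation by structure theory: the reduction $c_\lambda=\dim\pi_\lambda(U)$ is valid, and the reason deserves one explicit sentence -- since each $e_{(4-m,m)}$ is a \emph{primitive} idempotent and all generators $a_{v_k}e_{(4-m,m)}$ lie in the single minimal left ideal $\mathbb{C}[\mathcal{S}_4]e_{(4-m,m)}$, their Fourier blocks are supported on one column, so the dimension of their span equals the dimension of the sum of their column spaces, i.e.\ the multiplicity of $[\lambda]$ in the (non-direct) sum of right ideals; the $D_4$-branching data you state ($W_{(3,1)}|_{D_4}=\chi\oplus\rho$, $W_{(2,2)}|_{D_4}=\mathbf{1}\oplus\psi$, with $\chi((1\,2)(3\,4))=-1$) check out against the characters, the orbit structure $4+2$ of $\mathcal{B}^{(2,2)}$ is right, and the placement $v_7\in V_\rho$ plus the exclusion of $U=W_{(3,1)}$ via $\pi_{(2,2)}(v_8)\neq 0$ correctly forces $U=V_\rho\oplus L$ with $L\subseteq W_{(2,2)}$, hence $2[3\,1]+[2\,2]$. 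What your route buys is a computer-free, conceptual explanation tied to the stability group $\mathcal{A}_{U_0^{(2,2)}}$ (and it in fact predicts the later ``jump'' table); what it costs is reliance on three facts that in the paper are themselves computer output -- $\mathcal{R}_{v_7}\sim[3\,1]$, $\mathcal{R}_{v_8}\sim[3\,1]+[2\,2]$, and $\mathcal{A}_{U_0^{(2,2)}}$ being the dihedral group of order $8$ -- each of which is, however, a short hand check (e.g.\ $v_7=u_1+u_2-u_3-u_4$ with $u_i:=\sum_{j\neq i}\{i,j\}$ shows $v_7\in W_{(3,1)}$, and $t$, $(2\,4)$ visibly preserve $U$), so no genuine gap remains.
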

Clearly, the ideals $\mathcal{R}_u$ of single (generic) eigenvectors $u\in U_0^{(2,2)}$ are ''smaller'' than $\mathcal{R}_{U_0^{(2,2)}}$. Moreover, $U_0^{(2,2)}$ contains two linear subspaces over which a reduction of $\mathcal{R}_u$ arises (see Sec.~\ref{sec4a}).
\begin{center}
{\small
\begin{tabular}{l|c}
 & $\mathcal{R}_u$ \\
\hline
$u\in\mathcal{L}_{\mathbb{C}}\{v_7,v_8,v_{10}\}$ generic & $[3\,1] + [2\,2]$\\
\hline
$u\in\mathcal{L}_{\mathbb{C}}\{v_7 - v_8 - v_{10}\}$ & $[2\,2]$ \\
$u\in\mathcal{L}_{\mathbb{C}}\{v_7,v_8 - v_{10}\}$ & $[3\,1]$ \\
\end{tabular}
}
\end{center}
Note that $v_7$ reduces $\mathcal{R}_v$, since $v_7\in\mathcal{L}_{\mathbb{C}}\{v_7,v_8 - v_{10}\}$. This coincides with the fact that $\mathcal{A}_{\mathcal{L}_{\mathbb{C}}\{v\}}$ is much bigger for $v_7$ than for $v_8,v_{10}$.
Explicite date about all groups and right ideals discussed in Sec.~\ref{sec5} can be found online in the {\sf Mathematica} notebooks \cite[symgroups\_\hspace*{2pt}n4.nb, symclass\_\hspace*{2pt}n4.nb]{fie21}.

\section*{References}

\end{document}